\documentclass{amsart}
\usepackage{amssymb,latexsym}
\usepackage{amscd,amsthm}

\usepackage[all]{xy}
\usepackage{tikz}

\usepackage{tikz-cd}

\newtheorem{theorem}{Theorem}[section]

\newtheorem{lemma}[theorem]{Lemma}
\newtheorem{proposition}[theorem]{Proposition}
\newtheorem{corollary}[theorem]{Corollary}

\theoremstyle{definition}
\newtheorem{definition}[theorem]{Definition}

\newtheorem*{notation}{Notation}

\newtheorem*{theorem*}{Theorem}

\DeclareMathOperator{\Ext}{Ext}
\DeclareMathOperator{\Hom}{Hom}

\DeclareMathOperator{\Ker}{Ker}

 \newcommand{\Oplus}{\ensuremath{\vcenter{\hbox{\scalebox{1.2}{$\oplus$}}}}} 				
 
   \newcommand{\mycap}{\ensuremath{\vcenter{\hbox{\scalebox{1.4}{$\,\cap\,$}}}}}             
 
\newcommand{\class}[1]{\mathcal{#1}}   

\newcommand{\ch}{\textnormal{Ch}(R)}

\newcommand{\rmod}{R\textnormal{-Mod}}

\newcommand{\tilclass}[1]{\widetilde{\class{#1}}}
\newcommand{\dgclass}[1]{dg\widetilde{\class{#1}}}

\newcommand{\rightperp}[1]{#1^{\perp}}
\newcommand{\leftperp}[1]{{}^\perp #1}

\begin{document}

\title{Weakly Ding injective complexes}
\thanks{2020 MSC: 16E05, 16E10, 16E65, 18G25, 18G35,  18N40}

\author{James Gillespie}
\address{J.G. \ Ramapo College of New Jersey \\
         School of Theoretical and Applied Science \\
         505 Ramapo Valley Road \\
         Mahwah, NJ 07430\\ U.S.A.}
\email[Jim Gillespie]{jgillesp@ramapo.edu}
\urladdr{http://pages.ramapo.edu/~jgillesp/}

\author{Alina Iacob}
\address{A.I. \ Department of Mathematical Sciences \\
         Georgia Southern University \\
         Statesboro (GA) 30460-8093 \\ U.S.A.}
\email[Alina Iacob]{aiacob@GeorgiaSouthern.edu}
\urladdr{https://sites.google.com/a/georgiasouthern.edu/aiacob/home}

\date{\today}

\keywords{weakly Ding injective complex, Gorenstein FP-pro-injective module, Ding-Chen ring, stable chain complex category}

\begin{abstract} Working over a  (left) coherent ring, we consider the class of weakly Ding injective complexes.
These are the cycles of the exact complexes of FP-injective complexes that stay exact when applying   $\Hom(A,-)$ for any FP-injective complex $A$. We study the  cotorsion pair generated by the class of all such  complexes, and exhibit it as part of an abelian model structure. As an application we show that when $R$ is a Ding-Chen ring, its stable chain complex category is compactly generated and triangle equivalent to the stable category of four Frobenius categories. They are the categories of all (i) complexes of Gorenstein injective modules, (ii) complexes of Gorenstein projective modules, (iii) complexes of Gorenstein flat-cotorsion modules, and (iv)  complexes of Gorenstein FP-pro-injective modules.
\end{abstract}

\maketitle

\section{introduction}
Let $R$ be a ring. Going back to~\cite{stenstrom-fp}, an $R$-module $A$ is  called FP-injective if $\Ext^1_R(F,A) = 0$ for all finitely presented modules $F$. When $R$ is a coherent ring,  the class of FP-injective modules  possesses several   properties that are well-known to hold for injective modules over Noetherian rings; for example, closure under direct limits and closure under taking cokernels of monomorphisms. Along the same lines, Ding injective $R$-modules were introduced in~\cite{ding and mao 08, gillespie-Ding-Chen rings} as a generalization of the Gorenstein injective modules but with good properties over coherent rings, especially over the Ding-Chen rings of~\cite{ding and chen 93}. Such rings are the coherent analog of  Iwanaga-Gorenstein rings;  they are two-sided coherent rings $R$ having finite self FP-injective dimensions,  $FP\textnormal{-id}(R_R)<\infty$ and $FP\textnormal{-id}({}_RR)<\infty$.  By definition, Ding injective modules are  the cycles of  exact complexes of injective modules that stay exact when applying the functor $\Hom_R(A,-)$ for any FP-injective module $A$. 
Note that instead of exact complexes of injectives, one could also consider exact complexes of FP-injective modules.  In particular,  a module $M$ is said to be  \emph{weakly Ding injective} if it equals  a  cycle of some exact complex of FP-injective modules that remains exact when applying the functor $\Hom_R(A,-)$ for any FP-injective module $A$.  The weakly Ding injective modules have been studied recently in~\cite{Goren-FP-inj, iacob-weakly-Ding-1, iacob-weakly-Ding-2, iacob-weakly-Ding-3, gillespie-FP-pro-injective}.  
One interesting aspect of these modules is that, over coherent rings, they generate a class of modules possessing a number of homological  properties that are dual to that of the Gorenstein flat modules of~\cite{G-flat-modules, enochs-jenda-book, saroch-stovicek-G-flat}
 
In this paper we define and study  \emph{weakly Ding injective complexes} of $R$-modules.  Indeed the above definitions make perfect sense in the category $\ch$ of chain complexes of $R$-modules.
So we define a weakly Ding injective complex to be a complex that  arises as a cycle of some exact complex of FP-injective complexes which remains exact when applying the functor $\Hom_{\ch}(A, -)$  for any FP-injective complex $A$. In Section~\ref{sec-results} we show that over a (left) coherent ring $R$,  a number of properties of  weakly Ding injective $R$-modules extend to complexes of $R$-modules. Firstly, any weakly Ding injective complex is  just a direct sum of some Ding injective complex with an FP-injective complex.
 Using this, we show in Corollary~\ref{corollary-left-perp} that ${}^\bot \textnormal{w}\mathcal{DI}_{\textnormal{C}}$, the left $\Ext$-orthogonal class to that of all  weakly Ding injective complexes, identifies with the class of all complexes of FP-projective modules in  ${}^\bot \textnormal{C}(\mathcal{DI})$,  the left $\Ext$-orthogonal  class  to that of all complexes of Ding injective modules. That is, 
$${}^\bot \textnormal{w}\mathcal{DI}_{\textnormal{C}} = \textnormal{C}(\mathcal{FP})\mycap {}^\bot \textnormal{C}(\mathcal{DI}).$$

Our interest in the weakly Ding injective complexes stems from the fact that  they  generate a cotorsion pair that is cogenerated by a set, and hence complete. In fact, we show in Theorem~\ref{them-cot-pair} that the weakly Ding injective complexes generate the class of fibrant objects in an hereditary abelian model structure on $\ch$. That is, the fibrant objects are those in  $({}^\bot\textnormal{w}\mathcal{DI}_{\textnormal{C}})^\bot$,   the left-right-perp, again relative to $\Ext^1_{\ch}(-,-)$,  to the class of all weakly Ding injective complexes.
We show in Proposition~\ref{prop-left-right-perp} that this class  coincides with  the class of all complexes having components in the left-right-perp, now relative to $\Ext^1_R(-,-)$,   to the class of all  weakly Ding injective modules. In the notation we will be using, 
$$({}^\bot\textnormal{w}\mathcal{DI}_{\textnormal{C}})^\bot   = \textnormal{C}[({}^\bot\textnormal{w}\class{DI})^\bot],$$ 
and so we have an hereditary abelian model structure on $\ch$:
   \begin{equation}\tag{$\dagger$}\label{eq-intro}
   \mathfrak{M} =  (\textnormal{C}(\mathcal{FP}) , {}^\bot \textnormal{C}(\mathcal{DI}) ,   \textnormal{C}[({}^\bot{\textnormal{w}\mathcal{DI}})^\bot]).
   \end{equation}

Section~\ref{section-Ding-Chen} shows how the model structure in  (\ref{eq-intro}) provides  a new representation of the stable category of chain complexes of modules over a Ding-Chen ring $R$.
By definition, this category  is 
$$\textnormal{StCh}(R) := \ch/\tilclass{W},$$
the triangulated localization  of $\ch$  (in the sense of~\cite[\S6.7]{gillespie-book}), 
by the full subcategory $\tilclass{W}\subseteq \ch$ consisting of all complexes of finite flat dimension. 
It is  already known, and we explain this in Section~\ref{section-Ding-Chen},  that $\textnormal{StCh}(R)$ exists and has two explicit representations that are dual to one another. These are  
$$\textnormal{St}(\textnormal{C}(\class{GI})) \simeq\ \textnormal{StCh}(R) \simeq \textnormal{St}(\textnormal{C}(\class{GP})),$$
where $\textnormal{St}(\textnormal{C}(\class{GI}))$ denotes (resp. $ \textnormal{St}(\textnormal{C}(\class{GP}))$ denotes)  the stable category of the Frobenius category of all chain complexes of Gorenstein injective (resp. Gorenstein projective) $R$-modules.
A lesser-known representation of  the stable category of complexes is 
$$\textnormal{StCh}(R)\simeq\textnormal{St}(\textnormal{C}(\class{GFC})),$$
 where $\textnormal{C}(\class{GFC})$ denotes  the Frobenius category of all chain complexes of $R$-modules that are \emph{Gorenstein flat-cotorsion} in the sense of~\cite{cet-totally-flat-cot-theory}.  
 As discussed in~\cite{gillespie-FP-pro-injective}, over a coherent ring $R$, the dual notion to that of a Gorenstein flat-cotorsion module is a \emph{Gorenstein FP-pro-injective} module (the definition is on page~\pageref{page-G-pro-inj}).   
 This leads us here to  a representation of $\textnormal{StCh}(R)$  that is dual to $\textnormal{St}(\textnormal{C}(\class{GFC}))$.   In particular, we show in  Theorem~\ref{theorem-FP-model}   that over a coherent ring $R$ we have  
$$\textnormal{St}(\textnormal{C}(\class{GFP})) \simeq\ \textnormal{StCh}(R)\simeq\textnormal{St}(\textnormal{C}(\class{GFC})), $$
where  now $\textnormal{C}(\class{GFP})$  denotes  the Frobenius category of all chain complexes of $R$-modules that are \emph{Gorenstein FP-pro-injective} in the sense of~\cite{gillespie-FP-pro-injective}.  Indeed   Theorem~\ref{theorem-FP-model} shows that the stable category, $\textnormal{St}(\textnormal{C}(\class{GFP}))$, is exactly the homotopy category of the model structure $\mathfrak{M}$, shown above in (\ref{eq-intro}),  of the previously mentioned  Theorem~\ref{them-cot-pair}.
The following statement summarizes  the main results of Section~\ref{section-Ding-Chen}.

\begin{theorem*}[see Proposition~\ref{prop-i-p-f-models}, Theorem~\ref{theorem-FP-model}, Corollary~\ref{cor-stable-cats}]
Let $R$ be a Ding-Chen ring. Then  $\textnormal{StCh}(R)$ exists,   it is compactly generated, and is triangle equivalent to each of the following stable categories:
$$\textnormal{St}(\textnormal{C}(\class{GI})) \simeq\ \textnormal{St}(\textnormal{C}(\class{GP})) \simeq\ \textnormal{St}(\textnormal{C}(\class{GFC}))\simeq\textnormal{St}(\textnormal{C}(\class{GFP})).$$
Here $\class{GI}$,  $\class{GP}$, $\class{GFC}$, and $\class{GFP}$ are respectively the classes of Gorenstein injective,  Gorenstein projective,  Gorenstein flat-cotorsion, and  Gorenstein FP-pro-injective modules. 
Each arises as the homotopy category of an abelian model structure on $\ch$, having $\tilclass{W}$ as its class of trivial objects, as follows: 
\begin{itemize}
\item The  injective model structure, $\mathfrak{M}_{inj} = (All, \tilclass{W},  \class{GI}_{\textnormal{C}})$. Its homotopy category is triangle equivalent to  $\textnormal{St}(\textnormal{C}(\class{GI}))$,   the stable category of the Frobenius category $\class{GI}_{\textnormal{C}} = \textnormal{C}(\class{GI})$ of all complexes of Gorenstein injectives.
\item The  projective model structure, $\mathfrak{M}_{proj} = ( \class{GP}_{\textnormal{C}}, \tilclass{W},  All)$.   Its homotopy category is triangle equivalent to  $\textnormal{St}(\textnormal{C}(\class{GP}))$,   the stable category  of the Frobenius category $\class{GP}_{\textnormal{C}} = \textnormal{C}(\class{GP})$ of all complexes of Gorenstein projectives.
\item The flat model structure, $\mathfrak{M}_{flat} = (\class{GF}_{\textnormal{C}}, \tilclass{W}, \textnormal{C}(Cot))$, where  $\textnormal{C}(Cot)$ is the class of all complexes of cotorsion modules. Its homotopy category is triangle equivalent to   $\textnormal{St}(\textnormal{C}(\class{GFC}))$, the stable category of the Frobenius category $\class{GF}_{\textnormal{C}} \mycap \textnormal{C}(Cot) = \textnormal{C}(\class{GFC})$.  Here, $\class{GFC}$  denotes  the class of all  Gorenstein flat-cotorsion modules in the sense of~\cite{cet-totally-flat-cot-theory}.   
\item The FP-injective model structure, $\mathfrak{M}_{fp} = (\textnormal{C}(\class{FP}), \tilclass{W},  ({}^\bot\textnormal{w}\mathcal{DI}_{\textnormal{C}})^\bot)$, where  $\textnormal{C}(\class{FP})$ is the class of all complexes of FP-projective modules.   Its homotopy category is triangle equivalent to $\textnormal{C}(\class{FP}) \mycap  ({}^\bot\textnormal{w}\mathcal{DI}_{\textnormal{C}})^\bot  = \textnormal{St}(\textnormal{C}(\class{GFP}))$, the stable category of the Frobenius category $\textnormal{C}(\class{GFP})$.  Here, $\class{GFP}$  denotes  the class of all  Gorenstein FP-pro-injective modules in the sense of~\cite{gillespie-FP-pro-injective}.   
\end{itemize}
\end{theorem*}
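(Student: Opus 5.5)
The strategy is to build all four models with one mechanism, Hovey's correspondence. Each model has $\tilclass{W}$, the complexes of finite flat dimension, as its class of trivial objects. So the work is to exhibit, in each case, two complete hereditary cotorsion pairs in $\ch$ that share this core. Their homotopy category is then the triangulated localization $\ch/\tilclass{W}$, which gives existence of $\textnormal{StCh}(R)$ and all the stated equivalences at once. For each model we use the standard fact that the homotopy category of an hereditary abelian model structure is equivalent to the stable category of the Frobenius category of bifibrant objects.

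The first input is that over a Ding-Chen ring a module has finite flat dimension iff it has finite FP-injective dimension, iff it has finite projective/injective dimension $\le$ some $n$ (Ding-Chen). Hence $\tilclass{W}$ is thick and contains the relevant (FP-)injective and projective complexes.

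For $\mathfrak{M}_{inj}$ and $\mathfrak{M}_{proj}$:
\begin{enumerate}
\item Show that $(\tilclass{W},\class{GI}_{\textnormal{C}})$ and $(\class{GP}_{\textnormal{C}},\tilclass{W})$ are complete hereditary cotorsion pairs.
\item Deduce these from the module-level Gorenstein cotorsion pairs over Ding-Chen rings, namely $(\class{W},\class{GI})$ and $(\class{GP},\class{W})$.
\item Pass to complexes by the degreewise-lifting results: $\textnormal{C}(\class{A})$ is a right (resp.\ left) half of a cotorsion pair with left half $\dgclass{}$-type or $\tilclass{}$-type classes. The key identity needed is that an exact-or-not complex with components in $\class{W}$ is in $\tilclass{W}$. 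This holds because flat dimension of a complex is bounded by the uniform bound $n$, via dimension shifting.
\item The flat model $\mathfrak{M}_{flat}$ is handled identically, using the Gorenstein flat cotorsion pair $(\class{GF},\class{W}\cap Cot)$ in its complex version. Its bifibrant objects are $\class{GF}_{\textnormal{C}}\cap\textnormal{C}(Cot)=\textnormal{C}(\class{GFC})$, using \cite{cet-totally-flat-cot-theory}.
\end{enumerate}

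For $\mathfrak{M}_{fp}$ we invoke Theorem~\ref{them-cot-pair} and Proposition~\ref{prop-left-right-perp} to obtain the hereditary model structure $(\textnormal{C}(\mathcal{FP}),{}^\bot\textnormal{C}(\mathcal{DI}),\textnormal{C}[({}^\bot\textnormal{w}\class{DI})^\bot])$. It then remains to prove ${}^\bot\textnormal{C}(\mathcal{DI})=\tilclass{W}$ over a Ding-Chen ring. Over Ding-Chen rings Ding injective equals Gorenstein injective, so ${}^\bot\textnormal{C}(\class{DI})={}^\bot\class{GI}_{\textnormal{C}}=\tilclass{W}$ by the injective model already established. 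The bifibrant objects are $\textnormal{C}(\class{FP})\cap\textnormal{C}[({}^\bot\textnormal{w}\class{DI})^\bot]=\textnormal{C}(\class{FP}\cap({}^\bot\textnormal{w}\class{DI})^\bot)$. We identify this with $\textnormal{C}(\class{GFP})$ using the module characterization of Gorenstein FP-pro-injectives from \cite{gillespie-FP-pro-injective}.

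Compact generation I would get from the projective model. The cotorsion pair $(\class{GP}_{\textnormal{C}},\tilclass{W})$ is cogenerated by a set of finitely presented-type complexes, namely disks and spheres on finitely generated Gorenstein projectives, or on the syzygies of $R$. Over a coherent ring with finite FP-injective dimension, $\tilclass{W}$ is closed under direct limits, because flat dimension is. That makes the model finitely generated in Hovey's sense, so the homotopy category is compactly generated.

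I expect the main obstacle to be this compact generation step, together with verifying that $\tilclass{W}$ coincides with both ${}^\bot\textnormal{C}(\class{DI})$ and $\textnormal{C}(\class{GP})^\bot$ as complex classes rather than just componentwise.
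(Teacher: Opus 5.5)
There is a genuine gap. Your treatment of $\mathfrak{M}_{fp}$ and of compact generation is essentially the paper's. However, steps (1)--(3), which are meant to produce the injective and projective models (and ``identically'' the flat one), rest on an identity that is false.

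Every complex in $\tilclass{W}$ is exact, so $S^0(R)$, whose components lie in $\class{W}$, already refutes the ``exact-or-not'' version. The exact version fails too. The ring $R=k[x]/(x^2)$ is self-injective, hence Ding--Chen with $d=0$. The complex $\cdots\xrightarrow{x}R\xrightarrow{x}R\xrightarrow{x}\cdots$ is exact with projective components, yet every cycle is $\cong k$, which has infinite flat dimension. Dimension shifting only controls the cycles of \emph{bounded} exact complexes.

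In fact the identity contradicts the statement you are proving. It would force $\tilclass{W}=\textnormal{C}(\class{W})$. The trivially cofibrant objects of $\mathfrak{M}_{proj}$ would then be $\textnormal{C}(\class{GP}\cap\class{W})$, i.e.\ all complexes of projective modules, whereas they must be the contractible ones (the projective-injective objects of $\textnormal{C}(\class{GP})$). Also, the Ding--Chen equivalences concern only flat and FP-injective dimension, not projective or injective dimension as your ``first input'' claims.

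Lifting the module pairs $(\class{W},\class{GI})$ and $(\class{GP},\class{W})$ to $\ch$ in the standard way only gives $(\tilclass{W},\dgclass{GI})$ and $(\dgclass{GP},\tilclass{W})$. The statement requires $\dgclass{GI}=\textnormal{C}(\class{GI})$ and $\dgclass{GP}=\textnormal{C}(\class{GP})$, equivalently $\leftperp{\textnormal{C}(\class{GI})}=\tilclass{W}=\rightperp{\textnormal{C}(\class{GP})}$. This is exactly the complex-level identification you flag at the end, and your plan gives no valid argument for it.

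The paper obtains it from Lemma~\ref{lemma-Goren-complexes}: over a Ding--Chen ring, the complexes of Gorenstein injective, projective and flat modules are exactly the Gorenstein ($=$ Ding) injective, projective and flat complexes. It combines this with the model structures of Yang--Liu--Liang. With that input, the inclusion $\tilclass{W}\subseteq\leftperp{\textnormal{C}(\class{GI})}$, for instance, becomes an honest dimension shift in $\ch$:
\begin{itemize}
\item a complex $Y$ of Gorenstein injectives is a Ding injective complex (Lemma~\ref{lemma-DI-complexes}), so $\Ext^i(E,Y)=0$ for all $E\in\tilclass{FI}$ and $i\geq 1$;
\item $X\in\tilclass{W}$ has an FP-injective coresolution $0\to X\to E^0\to\cdots\to E^d\to 0$ in $\ch$;
\item hence $\Ext^1(X,Y)\cong\Ext^{d+1}(E^d,Y)=0$.
\end{itemize}
For $\mathfrak{M}_{flat}$, ``handled identically'' also hides a further requirement. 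You need $\textnormal{C}(Cot)=\rightperp{\tilclass{F}}$, with $\tilclass{F}$ the flat complexes, i.e.\ that every complex of cotorsion modules is dg-cotorsion. This is again a statement about complexes, not about components. Once the injective model is in place, your $\mathfrak{M}_{fp}$ step goes through exactly as in the paper.
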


A key point behind the FP-injective model structure is that for a Ding-Chen ring $R$, the class of fibrant complexes satisfies
$$({}^\bot\textnormal{w}\mathcal{DI}_{\textnormal{C}})^\bot = \textnormal{C}(\class{Z}),$$
where $\class{Z}$ is the class of all cycle modules of exact complexes of FP-injective modules. It follows that this class is also both covering and special preenveloping in $\ch$; see Corollary~\ref{corollary-covers}.

\section{preliminaries}
 Foundational to this paper is the theory of cotorsion pairs,  covers, and envelopes  of modules (and chain complexes of modules) over a ring $R$. The basic theory and terminology we use can be found in~\cite{enochs-jenda-book} and~\cite{trlifaj-book}. We also use the book~\cite{gillespie-book} and the notation for complexes (such as $\tilclass{B}$ and $\dgclass{B}$ etc.) from~\cite{gillespie}.
 
Throughout the paper, $R$ is a (left) coherent ring unless explicitly stated otherwise.  $\rmod$ will denote  the category of (left) $R$-modules, and $\ch$ the associated category of chain complexes. 
Given to complexes $X, Y \in \ch$, the abelian group $\Hom_{\ch}(X,Y)$, of all chain maps $f \colon X \xrightarrow{} Y$, will simply be denoted by $\Hom(X,Y)$.

We recall that an $R$-module $A$ is called \emph{FP-injective} (or \emph{absolutely pure}) if $\Ext^1_R(F,A) = 0$ for all finitely presented modules $F$.  We use $\mathcal{FI}$ to denote the class of all FP-injective modules. Then by definition,  the  \emph{FP-projective} modules are those modules in $\leftperp{\mathcal{FI}}$, the left $\Ext$-orthogonal class to that of all FP-injective modules.  We let $\mathcal{FP} := \leftperp{\mathcal{FI}}$ denote this class of  modules. We have that $(\mathcal{FP}, \mathcal{FI})$ is a complete cotorsion pair, and $\mathcal{FP}$ equals the class of all direct summands of transfinite extensions of all fintely presented modules~\cite[Them.~3.8]{ding and mao 08}.  Since $R$ is coherent, the class of FP-injective modules is closed under cokernels of monomorphisms; see \cite[Prop. 4.2]{Kathy}. Thus the cotorsion pair $(\mathcal{FI}, \mathcal{FI})$ is also  hereditary.

The same definitions can be applied in $\ch$ to obtain \emph{FP-injective} and \emph{FP-projective} chain complexes. It is known that a complex is FP-injective if and only if it  it is in $\widetilde{\mathcal{FI}}$, the class of all exact complexes with each of its cycles in $\class{FI}$ \cite[\S3.2]{Ding-Chen-complex-models}.  Then letting $\textnormal{C}(\mathcal{FP})$ denote the class of all complexes with  components in $\mathcal{FP}$ we have the following.

\begin{lemma}\label{lemma-FP-cot-comp}
The FP-injective cotorsion pair in $\ch$ is precisely 
$$(\textnormal{C}(\mathcal{FP}),\tilclass{FI}),$$
and this is a complete hereditary cotorsion pair. 
\end{lemma}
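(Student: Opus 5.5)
The plan is to identify the cotorsion pair generated by FP-injective complexes with the one induced from $(\mathcal{FP},\mathcal{FI})$ by the general machinery of~\cite{gillespie}. For a complete cotorsion pair $(\class{A},\class{B})$ in $\rmod$ there are two induced cotorsion pairs in $\ch$: $(\tilclass{A}, \dgclass{B})$ and $(\dgclass{A},\tilclass{B})$. We apply the second one with $\class{A}=\mathcal{FP}$ and $\class{B}=\mathcal{FI}$. Since an FP-injective complex is the same as a complex in $\tilclass{FI}$ (\cite[\S3.2]{Ding-Chen-complex-models}), the FP-injective cotorsion pair in $\ch$ is $(\leftperp{\tilclass{FI}},\tilclass{FI})$. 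We know $(\dgclass{FP},\tilclass{FI})$ is a cotorsion pair; because $(\mathcal{FP},\mathcal{FI})$ is cogenerated by a set (the finitely presented modules), it is moreover complete. So the whole statement reduces to showing $\dgclass{FP}=\textnormal{C}(\mathcal{FP})$ and checking heredity.

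\textbf{Step 1: $\dgclass{FP}\subseteq \textnormal{C}(\mathcal{FP})$.} This holds by definition of the dg class.

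\textbf{Step 2: $\textnormal{C}(\mathcal{FP})\subseteq\leftperp{\tilclass{FI}}$.} Let $X\in\textnormal{C}(\mathcal{FP})$ and $E\in\tilclass{FI}$. Since each $X_n$ is in $\mathcal{FP}$ and each $E_n$ is FP-injective, we have $\Ext^1_R(X_n,E_n)=0$. Hence every extension $0\to E\to Y\to X\to 0$ is degreewise split, and
$$\Ext^1_{\ch}(X,E)\cong H_{-1}\mathcal{H}om(X,E)\cong \Hom_{K(R)}(X,\Sigma E)$$
up to indexing. It therefore suffices to show that every chain map $f\colon X\to E'$ with $E'\in\tilclass{FI}$ is null-homotopic. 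Coherence is used here: $E'$ is exact with cycles $Z_nE'$ FP-injective, so each short exact sequence $0\to Z_nE'\to E'_n\to Z_{n-1}E'\to 0$ has all terms in $\mathcal{FI}$. We build the homotopy $s_n\colon X_n\to E'_{n+1}$ inductively. At each stage the obstruction to extending lies in $\Ext^1_R(X_n,Z_{n}E')$, which vanishes because $X_n\in\mathcal{FP}$. For unbounded complexes the induction needs a starting point, and this is the main obstacle. The cleanest way around it is to use $\Hom$ from a degreewise-$\mathcal{FP}$ complex into an exact complex with cycles in $\mathcal{FI}$, and apply the standard lemma of~\cite{gillespie} (or~\cite[Lemma~3.9]{gillespie}-type argument): since $\mathcal{FI}$ is closed under cokernels of monomorphisms (the pair is hereditary because $R$ is coherent), $\tilclass{FI}$ is closed under taking suspensions and cokernels. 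The vanishing of $\Ext^1_{\ch}(X,E)$ for all $E\in\tilclass{FI}$ then follows from the known equality $\dgclass{A}=\textnormal{C}(\class{A})$ whenever $(\class{A},\class{B})$ is hereditary (Yang–Liu / Gillespie). We cite that result rather than constructing homotopies by hand.

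\textbf{Step 3: heredity.} Given Steps 1 and 2, the pair is $(\textnormal{C}(\mathcal{FP}),\tilclass{FI})$. It is hereditary because $\textnormal{C}(\mathcal{FP})$ is closed under kernels of epimorphisms, which is checked degreewise using the heredity of $(\mathcal{FP},\mathcal{FI})$. Dually, $\tilclass{FI}$ is closed under cokernels of monomorphisms, by a snake-lemma argument on the cycles.
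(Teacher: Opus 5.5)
Your framework is the same as the paper's: obtain the complete hereditary pair $(\dgclass{FP},\tilclass{FI})$, then show $\dgclass{FP}=\textnormal{C}(\mathcal{FP})$. The gap is in Step~2, which carries the whole content of the lemma, because it rests on a false general claim. There is no theorem that $\dgclass{A}=\textnormal{C}(\class{A})$ for every hereditary (even complete) cotorsion pair $(\class{A},\class{B})$.

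A counterexample is the projective cotorsion pair $(\mathcal{P},All)$ over $R=\mathbb{Z}/4$. The complex $\cdots\xrightarrow{2}\mathbb{Z}/4\xrightarrow{2}\mathbb{Z}/4\xrightarrow{2}\cdots$ is an exact complex of projectives. Its cycles $\mathbb{Z}/2$ are not projective, so it is not contractible. But an exact complex in $\dgclass{P}$ must be contractible, since its identity map is null-homotopic. So $\dgclass{P}\neq\textnormal{C}(\mathcal{P})$. Your inductive homotopy construction does work when $X$ is bounded below, and you rightly flagged the unbounded case as the obstacle. That obstacle cannot be removed by heredity alone.

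The missing ingredient is FP-projective periodicity over a coherent ring. It says that if $0\to M\to P\to M\to 0$ is exact with $P\in\mathcal{FP}$, then $M\in\mathcal{FP}$; equivalently, every exact complex of FP-projective modules has FP-projective cycles. This is~\cite[Example~4.3]{saroch-stovicek-G-flat} (see also~\cite{bazzoni-hrbek-positselski-fp-projective-periodicity}), and it is exactly what the paper cites to get $\dgclass{FP}=\textnormal{C}(\mathcal{FP})$.

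With periodicity you can finish quickly:
\begin{enumerate}
\item For $X\in\textnormal{C}(\mathcal{FP})$, take $0\to E\to D\to X\to 0$ with $D\in\dgclass{FP}$ and $E\in\tilclass{FI}$.
\item Heredity of $(\mathcal{FP},\mathcal{FI})$ puts $E\in\textnormal{C}(\mathcal{FP})$.
\item Periodicity makes the cycles of $E$ both FP-projective and FP-injective, so $E$ is contractible.
\item The sequence is degreewise split, and its class is a map into a shift of the contractible $E$, so it splits. Hence $X$ is a summand of $D$.
\end{enumerate}

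Coherence is also not used where you placed it: components of a complex in $\tilclass{FI}$ are FP-injective over any ring, by extension closure. Coherence enters through the heredity of $(\mathcal{FP},\mathcal{FI})$ and, essentially, through periodicity. Your Steps~1 and~3 and the completeness claim are fine.
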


\begin{proof}
The finitely presented complexes cogenerate a complete hereditary cotorsion pair, $(\dgclass{FP}, \tilclass{FI})$, by the proof of~\cite[Prop.~3.17]{Ding-Chen-complex-models}. But by 
\cite[Example 4.3]{saroch-stovicek-G-flat}, we have that $\dgclass{FP} = \textnormal{C}(\mathcal{FP})$. 
\end{proof}

\begin{definition}\label{def-weakly}
A complex $G$ is \emph{weakly Ding injective} if there is an exact complex of FP-injective complexes $\mathbb{E}$ such that $G = Z_0\mathbb{E}$ and with the property that $\Hom(F,\mathbb{E})$ is an exact complex of abelian groups for any FP-injective complex $F$.
\end{definition}

Definition~\ref{def-weakly} is the chain complex analog of the \emph{weakly Ding injective modules}; these were studied in~\cite{Goren-FP-inj, iacob-weakly-Ding-1, iacob-weakly-Ding-2, iacob-weakly-Ding-3, gillespie-FP-pro-injective}. If in Definition~\ref{def-weakly} we instead have that $\mathbb{E}$  is an exact complex of injective complexes (not just FP-injective complexes)  then $G$ is, by definition,  a \emph{Ding injective complex}. The same idea defines Ding injective modules. 
So it is immediate that any Ding injective complex (resp. module) is  a weakly Ding injective complex (resp. module). 
We will use the following notations throughout the paper. 
\begin{itemize}
\item $\mathcal{DI}$ denotes the class of all Ding injective modules.
\item $\textnormal{w}\class{DI}$ denotes the class of all weakly Ding injective modules. 
\item $\mathcal{DI}_{\textnormal{C}}$ denotes  the class of all Ding injective complexes. This coincides with the class of all complexes of  Ding injective modules;  see Lemma~\ref{lemma-DI-complexes}.
\item $\textnormal{w}\mathcal{DI}_{\textnormal{C}}$ denotes the class of all weakly Ding injective complexes. 
\item Given any class  of modules $\class{X}$, we let $\textnormal{C}(\mathcal{X})$ denote the class of all complexes of modules $X$ having each $X_n\in\class{X}$.
\end{itemize}

\begin{lemma}\cite[Main Theorem]{gillespie-iacob-Ding-inj-complexes}\label{lemma-DI-complexes}
For any ring $R$, a complex is Ding injective if and only if it is a complex of Ding injective modules. In our symbols, $\mathcal{DI}_{\textnormal{C}} = \textnormal{C}(\mathcal{DI})$.
\end{lemma}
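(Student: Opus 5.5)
The forward direction reduces to the disk complexes; the converse goes through a characterization of Ding injective objects by Ext-orthogonality, with one key orthogonality statement that I expect to be the main obstacle.

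\emph{Forward direction (Ding injective $\Rightarrow$ componentwise Ding injective).} I would use the disk complexes $D^n(A)$, with $A$ in degrees $n$ and $n-1$ and identity differential. For an FP-injective module $A$, the disk $D^n(A)$ is exact with FP-injective cycles, so it is an FP-injective complex. There is also a natural isomorphism $\Hom(D^n(A),Y)\cong\Hom_R(A,Y_n)$ for every complex $Y$. Now let $G=Z_0\mathbb{E}$, where $\mathbb{E}$ is an exact complex of injective complexes that is $\Hom(\tilclass{FI},-)$-exact.
\begin{itemize}
\item Injective complexes are contractible complexes of injective modules, so taking the $n$-th components of $\mathbb{E}$ gives an exact complex $\mathbb{E}_n$ of injective $R$-modules with $Z_0(\mathbb{E}_n)=G_n$.
\item By the isomorphism above, $\Hom_R(A,\mathbb{E}_n)\cong\Hom(D^n(A),\mathbb{E})$, which is exact for every FP-injective module $A$.
\end{itemize}
Hence each $G_n$ is Ding injective.

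\emph{Converse, general strategy.} Let $X\in\textnormal{C}(\mathcal{DI})$. Rather than gluing degreewise complete resolutions directly, which gives no control over the differentials, I would use the standard characterization (valid in any abelian category with enough injectives, here $\ch$): an object is Ding injective if and only if
\begin{itemize}
\item it lies in the right $\Ext^{\geq 1}$-orthogonal of the FP-injective objects, and
\item it admits a $\Hom(\tilclass{FI},-)$-exact left resolution by injectives whose syzygies again lie in that orthogonal class.
\end{itemize}

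\emph{Key orthogonality step.} The central claim is $\Ext^i_{\ch}(F,X)=0$ for every $F\in\tilclass{FI}$ and $i\geq1$. Since the components $F_n$ are FP-injective and the components $X_n$ are Ding injective, every extension is degreewise split. So $\Ext^1_{\ch}(F,X)$ is the group of chain maps $F\to\Sigma X$ modulo homotopy. It therefore suffices to show that $\mathcal{H}om(F,X)$ is exact whenever $F$ is exact with FP-injective cycles. My plan here is the following.
\begin{itemize}
\item Use that $(\leftperp{\mathcal{DI}},\mathcal{DI})$ is a complete hereditary cotorsion pair containing $\mathcal{FI}$ on the left.
\item Write $F$, using Hill's lemma and deconstructibility of $\tilclass{FI}$, as a transfinite extension of bounded-below exact complexes with FP-injective cycles.
\item For those bounded pieces, construct a null-homotopy of a given chain map inductively, degree by degree, using $\Ext^{\geq1}_R(\mathcal{FI},\mathcal{DI})=0$.
\item Pass to the transfinite extension by Eklof's lemma.
\end{itemize}
This is the step I expect to be the main obstacle, because complexes are unbounded and the general inclusion $\dgclass{B}\subseteq\textnormal{C}(\class{B})$ is strict. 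If Eklof-type arguments fail, I would instead use that each $X_n$ is itself a cycle of a $\Hom(\mathcal{FI},-)$-exact complex of injectives, and pass to the totalization.

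\emph{Converse, resolution step.} For the resolution, I would take degreewise epimorphisms $I^n\twoheadrightarrow X_n$ from injective modules whose kernels are Ding injective; these exist because $\mathcal{DI}$ is the class of cycles of the complete resolutions. I then assemble the epimorphism $\bigoplus_n D^n(I^n)\twoheadrightarrow X$ from an injective complex. Its kernel is again in $\textnormal{C}(\mathcal{DI})$, since $\mathcal{DI}$ is closed under extensions and under kernels of epimorphisms onto Ding injectives. Iterating produces the left resolution, and the orthogonality step gives its $\Hom(\tilclass{FI},-)$-exactness. Dually, an injective coresolution of $X$ whose cosyzygies stay in $\textnormal{C}(\mathcal{DI})$ is obtained from $\bigoplus_n D^{n+1}(J^n)$ with $J^n$ injective envelopes of the $X_n$. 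Splicing the two gives the required complete resolution, so $X\in\mathcal{DI}_{\textnormal{C}}$.
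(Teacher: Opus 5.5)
Your forward direction is correct, and the resolution step is also fine once it is justified properly. The degree-$n$ kernel $K_n$ sits in $0\to\Ker(I^n\to X_n)\to K_n\to I^{n+1}\to0$, so it is Ding injective by closure under extensions (the sequence even splits). The reason you gave, closure of $\mathcal{DI}$ under kernels of epimorphisms, is false in general: over $\mathbb{Z}$, take $0\to\mathbb{Z}\to\mathbb{Q}\to\mathbb{Q}/\mathbb{Z}\to0$.

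The genuine gap is the key orthogonality step, and the method you propose for it cannot work. Look at what the claim contains. Taking $X$ to be any complex of injective modules, it says that every exact complex $F$ with FP-injective cycles is coacyclic, i.e.\ $\Hom_{K(R)}(F,J)=0$ for all complexes $J$ of injectives. That is a genuine purity theorem, not a consequence of $\Ext^{\geq1}_R(\mathcal{FI},\mathcal{DI})=0$.

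Concretely, write $F=\bigcup_k\tau_{\geq -k}F$ using good truncations. The successive quotients are the disks $D^n(Z_{n-1}F)$, which are harmless since $\Ext^1(D^n(A),X)\cong\Ext^1_R(A,X_n)=0$. So Eklof reduces everything to the single piece $\tau_{\geq0}F$, which vanishes in negative degrees.

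Pieces vanishing in \emph{high} degrees would be easy: build the null-homotopy from the top, extending along $Z_kF\hookrightarrow F_k$ into $X_{k+1}$, with obstruction in $\Ext^1_R(Z_{k-1}F,X_{k+1})=0$. But nothing in your plan produces a filtration by such pieces. Hill's lemma gives filtrations by complexes of small cardinality, not of bounded length. Moreover, such a filtration would already make $F$ a transfinite extension of disks, which by Eklof is the claim itself.

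For a piece with $F_n=0$ for $n<0$, the induction must start at the bottom. There $s_0$ must satisfy $d^X_1s_0=f_0$, i.e.\ $f_0\colon Z_0F\to B_0X$ must lift along $X_1\twoheadrightarrow B_0X$, and the obstruction lies in $\Ext^1_R(Z_0F,Z_1X)$. Cycle modules of a complex of Ding injectives (even of injectives) are arbitrary modules, so nothing you have controls this, and the same problem recurs in every degree.

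The missing ingredient is purity. Since each $Z_nF$ is absolutely pure, every sequence $0\to Z_nF\to F_n\to Z_{n-1}F\to0$ is pure, so $F$ is pure acyclic. By \v{S}\v{t}ov\'{\i}\v{c}ek \cite{stovicek-purity}, a pure acyclic complex admits no nonzero maps in $K(R)$ to any complex of pure-injective (in particular injective) modules.

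With this, your totalization fallback can be made precise using the resolution $\cdots\to I^{(1)}\to I^{(0)}\to X\to0$ you already built:
\begin{itemize}
\item Each row $\cdots\to I^{(1)}_n\to I^{(0)}_n\to X_n\to0$ is $\Hom_R(A,-)$-exact for $A\in\mathcal{FI}$, because all syzygies lie in $\textnormal{C}(\mathcal{DI})$.
\item Applying $\mathcal{H}om(F,-)$ to this augmented right half-plane double complex keeps the rows exact, since each $F_m$ is FP-injective and $\mathcal{H}om(F,-)$ commutes with products.
\item The acyclic assembly lemma then shows that $\mathcal{H}om(F,T)\to\mathcal{H}om(F,X)$ is a quasi-isomorphism, where $T=\mathrm{Tot}^{\Pi}(I^{(\bullet)})$ is a complex of injectives.
\item Coacyclicity of $F$ makes $\mathcal{H}om(F,T)$ exact. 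Hence $\mathcal{H}om(F,X)$ is exact and $\Ext^1(F,X)=0$.
\end{itemize}
The paper itself gives no proof of this lemma; it imports it from \cite{gillespie-iacob-Ding-inj-complexes}.
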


\section{Results on weakly Ding injective complexes}\label{sec-results}
Again, $R$ is a (left) coherent ring throughout.

\begin{lemma}
The class $\widetilde{\mathcal{FI}}$ of all FP-injective complexes is covering in $\ch$.
    \end{lemma}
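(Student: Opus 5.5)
The plan is to use the standard criterion for covers: a class closed under direct limits (filtered colimits) that is precovering is covering. This is Enochs' theorem, valid in Grothendieck categories such as $\ch$; see \cite{enochs-jenda-book} or \cite{trlifaj-book}. So there are two things to check for $\tilclass{FI}$: that it is closed under direct limits, and that it is precovering.

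For closure under direct limits, take a direct system $(X^i)$ of complexes in $\tilclass{FI}$. Direct limits in $\ch$ are exact, so $\varinjlim X^i$ is again exact. Moreover $Z_n(\varinjlim X^i) \cong \varinjlim Z_n X^i$. Since $R$ is left coherent, the class $\mathcal{FI}$ of FP-injective modules is closed under direct limits; this is the standard fact recalled in the introduction, following Stenstr\"om \cite{stenstrom-fp}. Hence every cycle module of $\varinjlim X^i$ lies in $\mathcal{FI}$, and $\varinjlim X^i \in \tilclass{FI}$.

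For the precovering property, the first step is to identify $\tilclass{FI}$ with a class of the form $\rightperp{\class{S}}$ for a set $\class{S}$. By Lemma~\ref{lemma-FP-cot-comp} and its proof, $\tilclass{FI}$ is the right half of the cotorsion pair cogenerated by the set of (isomorphism representatives of) finitely presented complexes. Over a coherent ring, a complex is finitely presented exactly when it is bounded with finitely presented components, so the same description can be rephrased via the set of disks and spheres on finitely presented modules. The cleanest route is via purity. A complex $X$ lies in $\tilclass{FI}$ if and only if $\Ext^1_{\ch}(F,X)=0$ for all finitely presented complexes $F$, and this holds if and only if $X$ is absolutely pure in the locally finitely presented category $\ch$. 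A standard result then says the absolutely pure objects form a class closed under pure subobjects, products and direct limits. Such a class is precovering; the usual argument is a cardinality bound showing that every map into the class factors through a subobject of bounded size, which is cited e.g. from \cite{trlifaj-book} or Crivei--Prest--Torrecillas. Combined with direct-limit closure, Enochs' theorem gives covers.

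It remains to check closure under pure subcomplexes, so take a pure monomorphism $P\to X$ with $X\in\tilclass{FI}$. Purity means the sequence $0\to P\to X\to X/P\to 0$ stays exact under $\Hom(F,-)$ for finitely presented $F$. Applying the long exact $\Ext$ sequence and using $\Ext^1(F,X)=0$ gives $\Ext^1(F,P)=0$ (the map $\Hom(F,X)\to\Hom(F,X/P)$ is onto). Products are immediate, since $\Ext^1(F,\prod X^i)\cong\prod\Ext^1(F,X^i)$. The main obstacle is therefore only bookkeeping: citing a precovering theorem for classes closed under products, direct limits and pure subobjects in $\ch$. Alternatively, this step can be avoided by noting that $\tilclass{FI}$ is the left half of a complete cotorsion pair generated by a set (it is closed under pure quotients too, making it a Kaplansky class), which gives special precovers directly.
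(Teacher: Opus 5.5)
Your skeleton is the paper's: show $\tilclass{FI}$ is precovering and closed under direct limits, then apply the Enochs--Xu criterion \cite[Them.~2.2.12]{xu_flat_covers}. Your direct-limit check is correct and makes explicit what the paper uses tacitly. The gap is in the precovering step.

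The properties you actually verify are closure under pure subobjects and products. The mechanism you describe is that a map \emph{into} the class factors through a pure subobject of bounded size. Those are the ingredients of the \emph{preenveloping} argument: given $f\colon M\to X$ with $X\in\tilclass{FI}$, pick a small pure subcomplex $X'\subseteq X$ containing $f(M)$, and take a product over representatives. That produces preenvelopes, not precovers.

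For a precover of $M$ you must factor an arbitrary $g\colon A\to M$, with $A\in\tilclass{FI}$, through a small member of the class. A subobject $A'\subseteq A$ gives no map $A\to A'$. The cardinality argument that works (Holm--J{\o}rgensen) instead finds a pure subcomplex $P\subseteq\Ker g$ of $A$ with $A/P$ of bounded size, and factors $g$ through $A/P$. This needs closure under \emph{pure quotients}, plus coproducts. Your sentence ``such a class is precovering'' is true, but only because definable classes are automatically closed under pure epimorphic images, a fact you never invoke.

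Your fallback does not repair this. $\tilclass{FI}$ is the \emph{right} half of the pair $(\textnormal{C}(\mathcal{FP}),\tilclass{FI})$ of Lemma~\ref{lemma-FP-cot-comp}. It is not the left half of any cotorsion pair in general. A left half must contain the projective complex $D^n(R)$, whose nonzero cycle module is $R$. So $R$ would have to be FP-injective over itself, which fails for instance when $R=\mathbb{Z}$.

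To fix the proposal, first prove closure under pure quotients. Take a pure exact sequence $0\to P\to X\to X/P\to 0$ with $X\in\tilclass{FI}$, and a finitely presented complex $F$. Consider
$\Ext^1(F,X)\to\Ext^1(F,X/P)\to\Ext^2(F,P)$.
Your own pure-subobject computation gives $P\in\tilclass{FI}$. Since $F\in\textnormal{C}(\mathcal{FP})$ and the pair of Lemma~\ref{lemma-FP-cot-comp} is hereditary, $\Ext^2(F,P)=0$, so $X/P\in\tilclass{FI}$.

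From there you have two options. One is a Holm--J{\o}rgensen type theorem (coproducts plus pure quotients give precovers), which is presumably what you meant to cite from Crivei--Prest--Torrecillas. The other is the paper's route:
\begin{enumerate}
\item $\mathcal{FI}$ is closed under pure submodules and pure quotients, hence is a Kaplansky class \cite[Prop.~3.2]{holm-jorgensen-precovers-purity}.
\item Being also closed under direct limits, $\mathcal{FI}$ is deconstructible.
\item By \cite[Them.~4.2(2)]{stovicek-deconstructible}, $\tilclass{FI}$ is then deconstructible in $\ch$, hence precovering; no cotorsion pair is needed.
\end{enumerate}
The paper transfers everything from modules to complexes via that deconstructibility theorem. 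Your route, once repaired, works directly with purity in the locally coherent category $\ch$.
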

\begin{proof}
The class $\mathcal{FI}$ is closed under pure quotients and under pure submodules, so it is a Kaplansky class \cite[Prop.~3.2]{holm-jorgensen-precovers-purity}. Since $\mathcal{FI}$ is also closed under direct limits, it is a deconstructible class. By~\cite[Them.~4.2(2)]{stovicek-deconstructible}, the class  $\widetilde{\mathcal{FI}}$ of FP-injective complexes is deconstructible in $\ch$, therefore it is precovering. It is known that a precovering class that is closed under direct limits is covering~\cite[Them.~2.2.12]{xu_flat_covers}.
    \end{proof}

The arguments in~\cite[\S6]{gillespie-FP-pro-injective} extend to complexes. In particular, using Theorem~6.6 therein,  with the class $\mathcal{B}$ to be that of FP-injective complexes, we obtain:\\

\begin{proposition}\label{prop-weakly-Ding-sums}
  Any weakly Ding injective complex is equal to a direct sum $L \Oplus B$ with  $L$ a Ding injective complex, and with $B$ an FP-injective complex.
\end{proposition}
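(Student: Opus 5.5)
Let $G$ be weakly Ding injective, witnessed by an exact complex $\mathbb{E}$ of FP-injective complexes with $G = Z_0\mathbb{E}$ and $\Hom(F,\mathbb{E})$ exact for every FP-injective complex $F$. The plan is to mimic the module argument of \cite[\S6]{gillespie-FP-pro-injective}: replace each FP-injective term of $\mathbb{E}$ by an injective complex and split off the excess. The key input is that every FP-injective complex $B$ embeds in an injective complex $I$ with FP-injective cokernel. Indeed, since $\tilclass{FI}$ is closed under cokernels of monomorphisms over a coherent ring, the injective envelope $B \hookrightarrow I$ has cokernel in $\tilclass{FI}$. Hence each short exact sequence $0\to B\to I\to B'\to 0$ is $\Hom(F,-)$-exact for $F \in \tilclass{FI}$. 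This holds because $\Ext^1(F,B)$ may not vanish, but the relevant Ext groups can be controlled through the defining exactness property.

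First I would construct, working degree by degree along $\mathbb{E}$, an exact complex $\mathbb{I}$ of injective complexes together with a degreewise-split-type comparison. Concretely, I would build a short exact sequence of complexes
$$0 \to \mathbb{E} \to \mathbb{I} \to \mathbb{B} \to 0,$$
where $\mathbb{I}$ is an exact complex of injective complexes and $\mathbb{B}$ is an exact complex of FP-injective complexes whose cycles are FP-injective, i.e.\ $\mathbb{B}$ is contractible-like in the category of FP-injective complexes. One way to do this is to take a special $\tilclass{FI}$-cover or envelope in the category of complexes of complexes.

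The alternative, which I expect Theorem~6.6 of \cite{gillespie-FP-pro-injective} to encode abstractly for a class $\mathcal{B}$, is as follows. Set $L := Z_0\mathbb{I}$. The $\Hom(F,-)$-exactness of $\mathbb{I}$ follows from that of $\mathbb{E}$ and $\mathbb{B}$ via the long exact sequence, using that $\mathbb{B}$ has FP-injective cycles and so is $\Hom(F,-)$-exact since $\Ext^1(F,\tilclass{FI})=0$. Thus $L$ is Ding injective. On cycles we obtain $0\to G\to L\to Z_0\mathbb{B}\to 0$ with $Z_0\mathbb{B}$ FP-injective.

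The main step, and the expected obstacle, is to show the relevant sequence splits, producing $G \cong L'\oplus B$. The trick from the module case is to use instead the sequence $0\to G'\to G\oplus(\text{FP-injective})$. Specifically, use that $\Ext^1(B, G)=0$ for $B\in\tilclass{FI}$ and $G$ weakly Ding injective, which follows from $\Hom(B,\mathbb{E})$ exact and $\Ext^{\ge1}(B,\tilclass{FI})=0$ by dimension shifting. Applying this to $0\to K\to L\to G\to0$ type sequences, obtained dually by taking a Ding injective $L$ mapping onto $G$ with kernel in $\tilclass{FI}$ built from the left half of $\mathbb{E}$ and injective precovers, a Schanuel/pullback argument yields $L\oplus B\cong G\oplus B'$. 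I would then need to absorb this into the form stated, using that Ding injectives are closed under direct summands and that $\tilclass{FI}$ is closed under summands, possibly via an Eilenberg swindle with countable direct sums. Both classes are closed under direct sums over coherent rings, which is where the coherence and the covering lemma above enter. The swindle step is where I expect the most care to be needed.
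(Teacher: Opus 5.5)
There is a genuine gap, and it sits at the foundation. Your argument repeatedly uses $\Ext^1(\tilclass{FI},\tilclass{FI})=0$, and this is false.

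If $B\in\tilclass{FI}$ is not injective and $B\hookrightarrow I$ is its injective envelope, then $I/B\in\tilclass{FI}$, as you note. But $0\to B\to I\to I/B\to 0$ does not split, so $\Ext^1(I/B,B)\neq 0$ and the sequence is not $\Hom(I/B,-)$-exact. This breaks three of your steps:
\begin{enumerate}
\item Your ``key input'', that $0\to B\to I\to B'\to 0$ is $\Hom(F,-)$-exact, fails.
\item The claim that $\mathbb{B}$ is $\Hom(\tilclass{FI},-)$-exact merely because its cycles are FP-injective fails. This is exactly what you needed to conclude that $Z_0\mathbb{I}$ is Ding injective.
\item The claim $\Ext^1(\tilclass{FI},G)=0$ for weakly Ding injective $G$ fails.
\end{enumerate}
The third claim even contradicts the proposition. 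Every FP-injective complex $B$ is weakly Ding injective (use the contractible complex $B\xrightarrow{1}B$). So the claim would force every FP-injective complex to be injective, which fails over any coherent ring that is not Noetherian.

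Separately, your final step cannot work as described. An isomorphism $L\oplus B\cong G\oplus B'$, even after an Eilenberg swindle, only tells you what $G$ looks like after adding a summand. It never produces an internal decomposition $G\cong L'\Oplus B''$.

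The missing idea is the covering property of $\tilclass{FI}$, which is the lemma proved just before the proposition, combined with Wakamatsu's lemma. This covering lemma is the input the paper establishes immediately before invoking \cite[Thm.~6.6]{gillespie-FP-pro-injective}. Here is how the argument goes.
\begin{enumerate}
\item Since $\mathbb{E}$ is $\Hom(\tilclass{FI},-)$-exact, each $E_n\to Z_{n-1}\mathbb{E}$ is a surjective $\tilclass{FI}$-precover. Hence the $\tilclass{FI}$-cover $B_n\to Z_{n-1}\mathbb{E}$ is surjective, and $E_n\cong B_n\oplus C_n$ with $C_n\subseteq Z_n\mathbb{E}$.
\item Therefore $Z_n\mathbb{E}\cong K_n\oplus C_n$, where $C_n\in\tilclass{FI}$ and $K_n$ is the kernel of the cover. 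Wakamatsu's lemma gives $\Ext^1(\tilclass{FI},K_n)=0$.
\item Since a finite direct sum of covers is a cover, $K_n$ is also the kernel of the cover $B'_n\to K_{n-1}$. So the sequences $0\to K_n\to B'_n\to K_{n-1}\to 0$ splice into an exact complex.
\item This spliced complex is $\Hom(\tilclass{FI},-)$-exact, because $\Ext^1(\tilclass{FI},K_n)=0$. Its terms lie in $\tilclass{FI}$ and satisfy $\Ext^1(\tilclass{FI},B'_n)=0$, by extension closure. Hence they are injective: their injective envelopes have cokernels in $\tilclass{FI}$ and so split.
\end{enumerate}
Thus $K_0$ is Ding injective and $G=Z_0\mathbb{E}\cong K_0\Oplus C_0$. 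The FP-injective summand is split off by the cover, not by any orthogonality between FP-injectives.
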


\begin{corollary}\label{cor-weak-degree}
Every weakly Ding injective complex is a complex of weakly Ding injective modules. In our symbols, $\textnormal{w}\mathcal{DI}_{\textnormal{C}} \subseteq \textnormal{C}(\textnormal{w}\class{DI})$. 
\end{corollary}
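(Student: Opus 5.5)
By Proposition~\ref{prop-weakly-Ding-sums}, a weakly Ding injective complex $G$ can be written as $G = L \Oplus B$, where $L$ is a Ding injective complex and $B$ is an FP-injective complex. The plan is to check the components of $L$ and of $B$ separately, and then use closure of $\textnormal{w}\class{DI}$ under finite direct sums. Taking components is additive, so $G_n = L_n \oplus B_n$ for each $n$.

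First I look at $L$. Lemma~\ref{lemma-DI-complexes} gives $\mathcal{DI}_{\textnormal{C}} = \textnormal{C}(\mathcal{DI})$, so every $L_n$ is a Ding injective module. Every Ding injective module is weakly Ding injective, as remarked after Definition~\ref{def-weakly}, since an exact complex of injectives is an exact complex of FP-injectives. Hence $L_n \in \textnormal{w}\class{DI}$.

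Next I look at $B$. By \cite[\S3.2]{Ding-Chen-complex-models}, $B \in \tilclass{FI}$. So $B$ is exact with every cycle module $Z_nB$ FP-injective. Each short exact sequence $0 \to Z_nB \to B_n \to Z_{n-1}B \to 0$ splits, because $Z_nB$ is FP-injective and $Z_{n-1}B$ is a quotient; alternatively, $B_n$ is an extension of FP-injectives and hence FP-injective. Either way, every $B_n$ is FP-injective. An FP-injective module $A$ is weakly Ding injective: take the complex $\mathbb{E} = \cdots \to 0 \to A \xrightarrow{1} A \to 0 \to \cdots$, placed so that $Z_0\mathbb{E} = A$. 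It is an exact complex of FP-injectives, and it is contractible, so $\Hom_R(F,\mathbb{E})$ is exact for every $F$.

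Finally, $\textnormal{w}\class{DI}$ is closed under finite direct sums. If $\mathbb{E}$ and $\mathbb{E}'$ witness that $M$ and $M'$ are weakly Ding injective, then $\mathbb{E}\oplus\mathbb{E}'$ is exact with FP-injective components. Also $\Hom_R(F,\mathbb{E}\oplus\mathbb{E}') \cong \Hom_R(F,\mathbb{E})\oplus\Hom_R(F,\mathbb{E}')$ is exact, and $Z_0(\mathbb{E}\oplus\mathbb{E}') = M\oplus M'$. Therefore $G_n = L_n\oplus B_n \in \textnormal{w}\class{DI}$ for all $n$, so $\textnormal{w}\mathcal{DI}_{\textnormal{C}} \subseteq \textnormal{C}(\textnormal{w}\class{DI})$.

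There is no serious obstacle. All of the substance is in Proposition~\ref{prop-weakly-Ding-sums} and Lemma~\ref{lemma-DI-complexes}; the only points to check are that the components of an FP-injective complex are FP-injective, and the two elementary closure properties above.
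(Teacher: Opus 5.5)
Your argument is correct and is the one the paper intends: the corollary is stated without proof as an immediate consequence of Proposition~\ref{prop-weakly-Ding-sums} and Lemma~\ref{lemma-DI-complexes}, together with the facts that Ding injective and FP-injective modules are weakly Ding injective and that $\textnormal{w}\mathcal{DI}$ is closed under finite direct sums. One correction: your claim that $0 \to Z_nB \to B_n \to Z_{n-1}B \to 0$ splits is false in general (over a coherent non-Noetherian ring, take a non-injective FP-injective module $A$; then $0 \to A \to E(A) \to E(A)/A \to 0$ is an exact complex in $\widetilde{\mathcal{FI}}$ that does not split), so drop that justification and rely only on closure of FP-injective modules under extensions, which is enough.
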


\begin{corollary}\label{corollary-left-perp}
The left orthogonal to the class of all weakly Ding injective complexes  is precisely the class of all complexes in  ${}^\bot \textnormal{C}(\mathcal{DI})$ having FP-projective components. I.e., 
$${}^\bot \textnormal{w}\mathcal{DI}_{\textnormal{C}} = \textnormal{C}(\mathcal{FP})\mycap {}^\bot \textnormal{C}(\mathcal{DI}).$$
\end{corollary}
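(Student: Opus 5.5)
We prove the two inclusions separately, using Proposition~\ref{prop-weakly-Ding-sums} as the key input.

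For ``$\supseteq$'', let $X \in \textnormal{C}(\mathcal{FP})\mycap {}^\bot \textnormal{C}(\mathcal{DI})$ and let $G$ be weakly Ding injective. By Proposition~\ref{prop-weakly-Ding-sums}, we can write $G = L \Oplus B$ with $L$ a Ding injective complex and $B$ an FP-injective complex. Then
$$\Ext^1_{\ch}(X,G) \cong \Ext^1_{\ch}(X,L)\oplus \Ext^1_{\ch}(X,B).$$
The first summand vanishes because $L \in \mathcal{DI}_{\textnormal{C}} = \textnormal{C}(\mathcal{DI})$ by Lemma~\ref{lemma-DI-complexes}, and $X \in {}^\bot \textnormal{C}(\mathcal{DI})$. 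The second summand vanishes because $B\in\tilclass{FI}$ and $X\in \textnormal{C}(\mathcal{FP})$, so Lemma~\ref{lemma-FP-cot-comp} applies. Hence $X\in {}^\bot \textnormal{w}\mathcal{DI}_{\textnormal{C}}$.

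For ``$\subseteq$'', suppose $\Ext^1_{\ch}(X,G)=0$ for every weakly Ding injective complex $G$. We verify the following two containments.
\begin{enumerate}
\item[(a)] $\tilclass{FI}\subseteq \textnormal{w}\mathcal{DI}_{\textnormal{C}}$. Given an FP-injective complex $B$, take $\mathbb{E}$ to be the complex $\cdots\to 0\to B \xrightarrow{1} B\to 0\to\cdots$, with $Z_0\mathbb{E}=B$. This is an exact complex of FP-injective complexes. It is contractible, so $\Hom(F,\mathbb{E})$ is exact for every $F$. (This is also the case $L=0$ of Proposition~\ref{prop-weakly-Ding-sums}.)
\item[(b)] $\mathcal{DI}_{\textnormal{C}}\subseteq \textnormal{w}\mathcal{DI}_{\textnormal{C}}$. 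This was noted right after Definition~\ref{def-weakly}: injective complexes are FP-injective, so every Ding injective complex is weakly Ding injective. By Lemma~\ref{lemma-DI-complexes}, $\mathcal{DI}_{\textnormal{C}} = \textnormal{C}(\mathcal{DI})$.
\end{enumerate}
By (b), $X\in {}^\bot\textnormal{C}(\mathcal{DI})$. By (a), $X\in {}^\bot\tilclass{FI}$, and this class equals $\textnormal{C}(\mathcal{FP})$ by Lemma~\ref{lemma-FP-cot-comp}, since that lemma says $(\textnormal{C}(\mathcal{FP}),\tilclass{FI})$ is a cotorsion pair.

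All the substantive content sits in Proposition~\ref{prop-weakly-Ding-sums}, which is taken as given. The remaining steps are formal. The only point needing care is identifying ${}^\bot\tilclass{FI}$ with $\textnormal{C}(\mathcal{FP})$, and that identification is exactly what Lemma~\ref{lemma-FP-cot-comp} provides.
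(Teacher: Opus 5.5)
Your proof is correct and follows the paper's route. The paper's one-line argument is exactly that Proposition~\ref{prop-weakly-Ding-sums}, together with Lemmas~\ref{lemma-FP-cot-comp} and~\ref{lemma-DI-complexes}, gives ${}^\bot \textnormal{w}\mathcal{DI}_{\textnormal{C}} = {}^\bot \textnormal{C}(\mathcal{DI}) \mycap {}^\bot\tilclass{FI} = {}^\bot \textnormal{C}(\mathcal{DI}) \mycap \textnormal{C}(\mathcal{FP})$; you simply make explicit the two inclusions, including the containments $\tilclass{FI} \subseteq \textnormal{w}\mathcal{DI}_{\textnormal{C}}$ and $\textnormal{C}(\mathcal{DI}) \subseteq \textnormal{w}\mathcal{DI}_{\textnormal{C}}$, which the paper leaves implicit.
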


\begin{proof}
 By combining Proposition~\ref{prop-weakly-Ding-sums} along with Lemmas~\ref{lemma-FP-cot-comp} and~\ref{lemma-DI-complexes}  
 we have  $^\bot \textnormal{w}\mathcal{DI}_{\textnormal{C}} = {}^\bot \textnormal{C}(\mathcal{DI}) \mycap  {}^\bot \widetilde{\mathcal{FI}}  =  {}^\bot \textnormal{C}(\mathcal{DI}) \mycap \textnormal{C}(\mathcal{FP})$.
\end{proof}

Next we will show that the left-right-perp to the class of all  weakly Ding injective complexes is nothing more than  the class of all complexes having components in the left-right-perp of the weakly Ding injective modules. That is, in our symbols, $$({}^\bot\textnormal{w}\mathcal{DI}_{\textnormal{C}})^\bot   = \textnormal{C}[({}^\bot\textnormal{w}\class{DI})^\bot].$$  
We will need some lemmas. 

\begin{lemma}\label{lemma-Hom-exact}  
If $X$ is a complex of modules with components in $(^\bot\textnormal{w}\mathcal{DI})^\bot$, then there is an exact and $\Hom (\widetilde{\mathcal{FI}},-)$ exact complex  
    $\cdots \rightarrow F_2 \rightarrow F_1 \rightarrow F_0 \rightarrow X \rightarrow 0$ with all of the $F_j$ being FP-injective complexes.
\end{lemma}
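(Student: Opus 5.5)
The plan is to build the resolution one step at a time, using $\widetilde{\mathcal{FI}}$-precovers that we force to be surjective. At each step the kernel should again be a complex with components in $({}^\bot\textnormal{w}\mathcal{DI})^\bot$, so the process can be iterated. Exactness of $\Hom(\widetilde{\mathcal{FI}},-)$ on the resulting complex then comes for free from the precovering property at every stage.

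\emph{Step 1: a surjective FP-injective precover.} Let $X$ have components $X_n\in({}^\bot\textnormal{w}\mathcal{DI})^\bot$.
\begin{itemize}
\item By the covering lemma above, $\widetilde{\mathcal{FI}}$ is (pre)covering, so there is a precover $\varphi\colon P\to X$ with $P\in\widetilde{\mathcal{FI}}$.
\item To make it surjective, I would use the module-level fact from \cite{gillespie-FP-pro-injective}: every module in $({}^\bot\textnormal{w}\mathcal{DI})^\bot$ is an epimorphic image of an FP-injective module, say $A_n\twoheadrightarrow X_n$. This is the analog of the statement that such modules have surjective $\mathcal{FI}$-precovers.
\item Each disk complex $D^{n+1}(A_n)=(A_n\xrightarrow{=}A_n)$ is exact with FP-injective cycles, hence lies in $\widetilde{\mathcal{FI}}$. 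The maps $A_n\to X_n$ induce a degreewise surjective chain map $\bigoplus_n D^{n+1}(A_n)\to X$.
\item Set $F_0:=P\oplus\bigoplus_n D^{n+1}(A_n)$. This is still in $\widetilde{\mathcal{FI}}$, since the class is closed under direct sums (direct limits), and $F_0\to X$ is a surjective $\widetilde{\mathcal{FI}}$-precover.
\end{itemize}

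\emph{Step 2: control the kernel.} Let $K=\ker(F_0\to X)$, so each $0\to K_n\to (F_0)_n\to X_n\to 0$ is exact.
\begin{itemize}
\item Every FP-injective module $A$ is weakly Ding injective, using the contractible complex $0\to A\xrightarrow{=}A\to0$. Hence $(F_0)_n\in\mathcal{FI}\subseteq\textnormal{w}\mathcal{DI}\subseteq({}^\bot\textnormal{w}\mathcal{DI})^\bot$.
\item If the module cotorsion pair $({}^\bot\textnormal{w}\mathcal{DI},({}^\bot\textnormal{w}\mathcal{DI})^\bot)$ is hereditary, then its right half is closed under kernels of epimorphisms, and so $K_n\in({}^\bot\textnormal{w}\mathcal{DI})^\bot$.
\item Hereditariness should follow from ${}^\bot\textnormal{w}\mathcal{DI}$ being closed under kernels of epimorphisms. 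This is established in \cite{gillespie-FP-pro-injective} over a coherent ring, and it reflects that $\textnormal{w}\mathcal{DI}$ is closed under cokernels of monomorphisms in the relevant sense.
\end{itemize}

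\emph{Step 3: iterate and verify.} Since $K$ again satisfies the hypothesis, repeat Step 1 and 2 on $K$ to obtain $F_1\to K$, and continue inductively. Splicing gives an exact complex $\cdots\to F_1\to F_0\to X\to 0$ with every $F_j\in\widetilde{\mathcal{FI}}$. For any $A\in\widetilde{\mathcal{FI}}$, each short exact piece $0\to K^{j+1}\to F_j\to K^j\to0$ stays exact under $\Hom(A,-)$ by the precover property. Splicing these short exact sequences gives exactness of $\Hom(A,-)$ applied to the whole resolution.

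\emph{Main obstacle.} The real difficulty is the module input in Steps 1 and 2: that every module of $({}^\bot\textnormal{w}\mathcal{DI})^\bot$ is a quotient of an FP-injective module, and that this class is closed under kernels of epimorphisms. If the first part is not directly available, I would try to derive it as follows.
\begin{itemize}
\item Take a special $\textnormal{w}\mathcal{DI}$-preenvelope of $X_n$ inside the complete cotorsion pair.
\item Use Proposition~\ref{prop-weakly-Ding-sums} in its module form: weakly Ding injectives are Ding injective $\oplus$ FP-injective.
\item Recall that Ding injectives are quotients of injectives.
\item Then show that elements of $({}^\bot\textnormal{w}\mathcal{DI})^\bot$ are quotients of weakly Ding injective modules, and hence of FP-injective ones.
\end{itemize}
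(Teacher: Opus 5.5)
\textbf{There is a genuine gap, and it sits in Step 2.}

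Your Step 2 rests on a false implication. Hereditariness of $({}^\bot\textnormal{w}\mathcal{DI},({}^\bot\textnormal{w}\mathcal{DI})^\bot)$ means the right class is closed under \emph{cokernels of monomorphisms}, not under kernels of epimorphisms. Right halves of hereditary pairs are generally not closed under kernels of epimorphisms. For example, for the pair (all abelian groups, injective abelian groups), the kernel $\mathbb{Z}/p$ of multiplication by $p$ on $\mathbb{Q}/\mathbb{Z}$ is not injective.

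From $0\to K_n\to (F_0)_n\to X_n\to 0$ you only get the following: for $C\in{}^\bot\textnormal{w}\mathcal{DI}$, $\Ext^1(C,K_n)=0$ if and only if every map $C\to X_n$ lifts to $(F_0)_n$. Nothing you have established forces this, since the precover property only gives lifts from FP-injective sources. So the induction does not go through as written.

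Step 1 has a related hole. The fact that every module of $({}^\bot\textnormal{w}\mathcal{DI})^\bot$ is a quotient of an FP-injective module is only asserted. Your fallback runs the wrong way: a special preenvelope from this cotorsion pair is a monomorphism out of $X_n$ into the right class, not an epimorphism onto $X_n$. Nothing in the fallback shows that $X_n$ is a quotient of a weakly Ding injective module.

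\textbf{The missing idea and how it repairs your route.} What you need is the idea the paper's proof is built on. Take a special $\mathcal{DI}$-precover $0\to D\to F\to M\to 0$ of $M\in({}^\bot\textnormal{w}\mathcal{DI})^\bot$, with $D\in\mathcal{DI}$ and $F\in{}^\bot\mathcal{DI}$. Extension closure puts $F$ in ${}^\bot\mathcal{DI}\cap({}^\bot\textnormal{w}\mathcal{DI})^\bot$, and \cite[Lemma~3]{iacob-weakly-Ding-3} says such modules are FP-injective. This gives the surjection you need in Step 1. It also rescues Step 2:
\begin{itemize}
\item $F\to M$ is an $\mathcal{FI}$-precover, because $\Ext^1(\mathcal{FI},\mathcal{DI})=0$.
\item Your map $(F_0)_n\to X_n$ is also an $\mathcal{FI}$-precover; test against disks on FP-injective modules.
\item The pullback of these two surjective precovers splits over both factors. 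Hence $K_n\oplus F\cong (F_0)_n\oplus D$, and $K_n$ is a summand of a module in $({}^\bot\textnormal{w}\mathcal{DI})^\bot$.
\end{itemize}

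\textbf{Comparison with the paper.} Once repaired, your iterated-precover route works, and it differs genuinely from the paper's. The paper does the construction only once, directly in $\ch$:
\begin{itemize}
\item It uses the pair $({}^\bot\textnormal{C}(\mathcal{DI}),\textnormal{C}(\mathcal{DI}))$ to get $0\to D\to F\to X\to 0$.
\item It applies Iacob's lemma degreewise, so $F$ is an exact complex of FP-injectives.
\item It upgrades $F$ to an FP-injective complex via \cite[Prop.~6.11]{stovicek-purity}: coacyclic complexes of FP-injectives are FP-injective over coherent rings.
\item It splices with the $\Hom(\widetilde{\mathcal{FI}},-)$-exact injective resolution of the Ding injective complex $D$.
\end{itemize}
No kernel control is ever needed there. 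Your version avoids the coacyclicity theorem, but in exchange it needs the covering lemma and the Schanuel-type comparison at every stage.
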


\begin{proof}
  Since $(^\bot \textnormal{C}(\mathcal{DI}),  \textnormal{C}(\mathcal{DI}) )$   is a complete hereditary cotorsion pair in $\ch$, there is a short  exact sequence $0 \rightarrow D \rightarrow F \rightarrow X \rightarrow 0$, with $D$ a Ding injective complex and with $F \in {}^\bot \textnormal{C}(\mathcal{DI})$.
  Then for each $n$, we have a short exact sequence $0 \rightarrow D_n \rightarrow F_n \rightarrow X_n \rightarrow 0$ with $D_n \in \class{DI} \subseteq (^\bot\textnormal{w}\mathcal{DI})^\bot$ and $X_n \in (^\bot\textnormal{w}\mathcal{DI})^\bot$, which gives us that $F_n$ is also in $(^\bot\textnormal{w}\mathcal{DI})^\bot$. But then $F_n \in {}^\bot\mathcal{DI}  \mycap ({}^\bot\textnormal{w}\mathcal{DI})^\bot$, and so  each $F_n$ is an FP-injective module by~\cite[Lemma~3]{iacob-weakly-Ding-3}. Therefore $F$ is an exact complex of FP-injective modules.
  
  Now if $I$ is any complex of injective modules, then $I \in \textnormal{C}(\mathcal{DI})$. Since  $F \in \leftperp{\textnormal{C}(\class{DI})}$ we  have $\Ext^1(F,I)=0$. Thus $F$ is a coacyclic complex of FP-injective modules. Since the ring is coherent, such a complex is FP-injective  by~\cite[Prop.~6.11]{stovicek-purity}.
  The complex $D$ is Ding injective, so there is an exact and $\Hom (\widetilde{\mathcal{FI}},-)$ exact complex $$\cdots \rightarrow F_2 \rightarrow F_1 \rightarrow D\rightarrow 0$$ with all $F_j$ injective complexes. Pasting it together with the short exact sequence $0 \rightarrow D \rightarrow F \rightarrow X \rightarrow 0$ gives us an exact and $\Hom (\widetilde{\mathcal{FI}},-)$ exact complex 
 $$\cdots \rightarrow F_2 \rightarrow F_1 \rightarrow F_0 \rightarrow X \rightarrow 0$$ with all of the $F_j$  being  FP-injective complexes.
\end{proof}

\begin{lemma}\label{lemma-dw-in-perp-perp}
   Every complex $X$ with each $X_n \in ({}^\bot\textnormal{w}\mathcal{DI})^\bot$ is in  $(^\bot\textnormal{w}\mathcal{DI}_\textnormal{C})^\bot$.
\end{lemma}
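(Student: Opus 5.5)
We must show that $\Ext^1_{\ch}(Y,X)=0$ for every $Y\in{}^\bot\textnormal{w}\mathcal{DI}_{\textnormal{C}}$. By Corollary~\ref{corollary-left-perp}, such a $Y$ lies in $\textnormal{C}(\mathcal{FP})\mycap{}^\bot\textnormal{C}(\mathcal{DI})$. The plan is to use the resolution from Lemma~\ref{lemma-Hom-exact} and show that any extension of $Y$ by $X$ splits.

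First, apply Lemma~\ref{lemma-Hom-exact} to $X$. This gives an exact, $\Hom(\widetilde{\mathcal{FI}},-)$-exact complex
$$\cdots\to F_1\to F_0\to X\to 0$$
with each $F_j\in\widetilde{\mathcal{FI}}$. Set $K_0=\Ker(F_0\to X)$ and $K_j=\Ker(F_j\to F_{j-1})$. Next, extend this to the right by a $\Hom(\widetilde{\mathcal{FI}},-)$-exact coresolution of $X$ by FP-injective complexes. For this, take an FP-injective preenvelope. The cotorsion pair $(\textnormal{C}(\mathcal{FP}),\tilclass{FI})$ of Lemma~\ref{lemma-FP-cot-comp} is complete, so $X$ embeds as $0\to X\to E^0\to C^0\to 0$ with $E^0\in\tilclass{FI}$ and $C^0\in\textnormal{C}(\mathcal{FP})$. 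This sequence is not automatically $\Hom(\tilclass{FI},-)$-exact, which is the difficulty. A better route is to show directly that $\Ext^1(Y,X)=0$ by dimension shifting along the left resolution only.

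For $Y\in\textnormal{C}(\mathcal{FP})$ we have $\Ext^{i}(Y,F_j)=0$ for all $i\geq1$, since $(\textnormal{C}(\mathcal{FP}),\tilclass{FI})$ is hereditary. The short exact sequence $0\to K_0\to F_0\to X\to 0$ then gives $\Ext^1(Y,X)\cong\Ext^2(Y,K_0)$, and iterating gives $\Ext^1(Y,X)\cong\Ext^{n+1}(Y,K_{n-1})$. Dimension shifting alone does not finish the argument, so I would instead realize everything inside a single weakly Ding injective complex.

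The key step is to show that each kernel $K_j$ is weakly Ding injective. It already has the left half of a complete resolution, namely $\cdots\to F_{j+2}\to F_{j+1}\to K_j\to0$, which is exact and $\Hom(\tilclass{FI},-)$-exact. We therefore need a right half: a $\Hom(\tilclass{FI},-)$-exact coresolution of $K_j$ by FP-injective complexes. Since
$$0\to K_0\to F_0\to X\to0$$
is $\Hom(\tilclass{FI},-)$-exact, this reduces to producing such a coresolution of $X$ itself. I would obtain it degreewise. Each $X_n\in({}^\bot\textnormal{w}\mathcal{DI})^\bot$, and the module-level theory of~\cite{gillespie-FP-pro-injective, iacob-weakly-Ding-3} should give $0\to X_n\to E_n\to X'_n\to0$ with $E_n$ FP-injective, $X'_n\in({}^\bot\textnormal{w}\mathcal{DI})^\bot$, and the sequence $\Hom(\mathcal{FI},-)$-exact. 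I would then assemble these into complexes using the disk complexes $D^n(E_n)$, which are FP-injective complexes. Assuming this works, we get $0\to X\to E\to X'\to0$ with $E\in\tilclass{FI}$ and $X'$ again having components in $({}^\bot\textnormal{w}\mathcal{DI})^\bot$, and $\Hom(\tilclass{FI},-)$-exactness follows from the degreewise splitting behavior of maps out of disks. Iterating yields the right half, so splicing shows that $K_0$, and in fact every $K_j$, is weakly Ding injective.

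With this in hand, $\Ext^{i}(Y,K_j)=0$ for $i\geq1$: the case $i=1$ holds because $Y\in{}^\bot\textnormal{w}\mathcal{DI}_{\textnormal{C}}$, and higher $i$ follow by shifting along the complete resolution of $K_j$ using $\Ext^{\geq1}(Y,\tilclass{FI})=0$. Therefore $\Ext^1(Y,X)\cong\Ext^2(Y,K_0)=0$, as required.

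The main obstacle is the degreewise-to-complex construction of the $\Hom(\tilclass{FI},-)$-exact monomorphism $X\to E$. If the disk assembly fails, I would instead use the completeness of the cotorsion pair cogenerated in \cite{gillespie-FP-pro-injective} at the complex level, or prove the module-level fact $({}^\bot\textnormal{w}\mathcal{DI})^\bot\subseteq$ cycles of $\Hom(\mathcal{FI},-)$-exact exact complexes and lift it.
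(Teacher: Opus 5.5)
There is a genuine gap at your key step: it asks for more than is known. Suppose every complex $X$ with components in $({}^\bot\textnormal{w}\mathcal{DI})^\bot$ had a $\Hom(\tilclass{FI},-)$-exact coresolution $0\to X\to E^0\to E^1\to\cdots$ by FP-injective complexes. Splicing it with the resolution of Lemma~\ref{lemma-Hom-exact} would then exhibit $X$ itself as a weakly Ding injective complex. Apply this to the stalk complex with a module $M\in({}^\bot\textnormal{w}\mathcal{DI})^\bot$ in degree $0$ and zeros elsewhere. Corollary~\ref{cor-weak-degree} then gives $M\in\textnormal{w}\class{DI}$, i.e.\ $({}^\bot\textnormal{w}\mathcal{DI})^\bot=\textnormal{w}\class{DI}$ over every coherent ring. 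The paper points out, in the subsection on extension closed weakly Ding injectives, that this equality is known only when $\textnormal{w}\class{DI}$ is closed under extensions, and is open in general.

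Correspondingly, the module-level input you invoke is not available. That input is a $\Hom(\class{FI},-)$-exact sequence $0\to X_n\to E_n\to X'_n\to 0$ with $E_n$ FP-injective and $X'_n$ back in $({}^\bot\textnormal{w}\mathcal{DI})^\bot$; iterated, it is essentially that same open statement. Your second fallback is literally the open inclusion $({}^\bot\textnormal{w}\mathcal{DI})^\bot\subseteq\textnormal{w}\class{DI}$. Even granting those sequences, the disk assembly only sees degreewise data. So it does not control the degreewise split extensions of an FP-injective complex by $X$, which $\Hom(\tilclass{FI},-)$-exactness of $0\to X\to E\to X'\to 0$ would also require.

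The missing idea is that no right half is needed. Your own dimension shift $\Ext^1(Y,X)\cong\Ext^2(Y,K_0)$ finishes the proof once you use what the proof of Lemma~\ref{lemma-Hom-exact} actually constructs. There $F_0=F$ comes from a special precover $0\to D\to F\to X\to 0$ for the complete hereditary cotorsion pair $({}^\bot\textnormal{C}(\class{DI}),\textnormal{C}(\class{DI}))$. So $K_0=D$ is a Ding injective complex, and $F$ is shown there to be FP-injective.

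For $Y\in\textnormal{C}(\class{FP})\mycap{}^\bot\textnormal{C}(\class{DI})$ the long exact sequence gives $0=\Ext^1(Y,F)\to\Ext^1(Y,X)\to\Ext^2(Y,D)=0$. The first group vanishes by Lemma~\ref{lemma-FP-cot-comp}. The last vanishes because the Ding injective cotorsion pair is hereditary and $D\in\textnormal{C}(\class{DI})$ by Lemma~\ref{lemma-DI-complexes}. This is exactly the paper's argument. Your assertion that $K_0$ is weakly Ding injective is true for this specific resolution, but because $K_0$ is Ding injective by construction, not because $X$ can be coresolved.
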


\begin{proof}
 Let $X$ be a complex of modules with each $X_n \in (^\bot\textnormal{w}\mathcal{DI})^\bot$. By the proof of Lemma~\ref{lemma-Hom-exact}, there is a short exact sequence $0 \rightarrow D \rightarrow F \rightarrow X \rightarrow 0$, with $D$ a Ding injective complex, and with $F$ an FP-injective complex.
 Let $Y \in {}^\bot\textnormal{w}\mathcal{DI}_{\textnormal{C}}= {}^\bot \textnormal{C}(\mathcal{DI}) \mycap \textnormal{C}(\mathcal{FP})$ (the equality is by Corollary~\ref{corollary-left-perp}). Then we obtain an exact sequence $0 = \Ext^1(Y,F) \rightarrow \Ext^1(Y,X) \rightarrow \Ext^2(Y,D)=0$.
  The first group here is zero by Lemma~\ref{lemma-FP-cot-comp}, and the last group is zero by Lemma~\ref{lemma-DI-complexes} along with the fact that the cotorsion pair $({}^\bot \textnormal{C}(\mathcal{DI}), \textnormal{C}(\mathcal{DI}))$ is hereditary. This shows $X \in ({}^\bot\textnormal{w}\mathcal{DI}_{\textnormal{C}})^\bot$.
\end{proof}

\begin{lemma}\label{lemma-perp-perp-in-dw}
If $X \in ({}^\bot\textnormal{w}\mathcal{DI}_{\textnormal{C}})^\bot$, then each $X_n$ is in $({}^\bot\textnormal{w}\mathcal{DI})^\bot$.
\end{lemma}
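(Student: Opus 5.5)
The plan is to use the standard adjunction between the disk functors and the component functors. For a module $M$, let $D^n(M)$ be the disk complex with $M$ in degrees $n$ and $n-1$ and identity differential. For every complex $X$ there is a natural isomorphism
$$\Ext^1_{\ch}(D^n(M),X)\cong \Ext^1_R(M,X_n).$$
This holds because $D^n(-)$ is left adjoint to the exact functor $X\mapsto X_n$, and $D^n(-)$ preserves projectives and is exact. Alternatively, one can check directly that short exact sequences $0\to X\to Z\to D^n(M)\to 0$ correspond to extensions of $M$ by $X_n$.

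So to show $X_n\in({}^\bot\textnormal{w}\mathcal{DI})^\bot$, fix $M\in{}^\bot\textnormal{w}\mathcal{DI}$. It then suffices to show that $D^n(M)\in{}^\bot\textnormal{w}\mathcal{DI}_{\textnormal{C}}$. Once that is done, $\Ext^1_R(M,X_n)\cong\Ext^1_{\ch}(D^n(M),X)=0$, because $X\in({}^\bot\textnormal{w}\mathcal{DI}_{\textnormal{C}})^\bot$.

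The main step is to show $D^n(M)\in{}^\bot\textnormal{w}\mathcal{DI}_{\textnormal{C}}$. I would do this using Corollary~\ref{corollary-left-perp}, which says ${}^\bot\textnormal{w}\mathcal{DI}_{\textnormal{C}}=\textnormal{C}(\mathcal{FP})\mycap{}^\bot\textnormal{C}(\mathcal{DI})$. This splits the step into two checks.

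\textbf{First check: $D^n(M)\in\textnormal{C}(\mathcal{FP})$.} Every FP-injective module is weakly Ding injective, since it is a cycle of a split exact complex of FP-injective modules, which stays exact under any $\Hom$. Hence $\mathcal{FI}\subseteq\textnormal{w}\mathcal{DI}$, and so $M\in{}^\bot\textnormal{w}\mathcal{DI}\subseteq{}^\bot\mathcal{FI}=\mathcal{FP}$. The components of $D^n(M)$ are copies of $M$ and $0$, so $D^n(M)\in\textnormal{C}(\mathcal{FP})$.

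\textbf{Second check: $D^n(M)\in{}^\bot\textnormal{C}(\mathcal{DI})$.} Since $\mathcal{DI}\subseteq\textnormal{w}\mathcal{DI}$, we have $M\in{}^\bot\mathcal{DI}$. For any $D\in\textnormal{C}(\mathcal{DI})$, the same adjunction isomorphism gives $\Ext^1_{\ch}(D^n(M),D)\cong\Ext^1_R(M,D_n)=0$.

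I expect the only delicate point to be justifying the $\Ext^1$ adjunction isomorphism cleanly. I would prove it directly. Given an extension $0\to X\to Z\to D^n(M)\to 0$ that splits in degree $n$, the degree-$n$ splitting determines a chain-level splitting, because $D^n(M)$ is generated in degree $n$. Conversely, any extension of $M$ by $X_n$ pulls back to an extension of complexes. This is a standard fact (see e.g.~\cite{gillespie}), and I would simply cite it. Everything else follows directly from Corollary~\ref{corollary-left-perp} and the inclusions $\mathcal{FI},\mathcal{DI}\subseteq\textnormal{w}\mathcal{DI}$.
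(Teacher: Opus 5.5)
Your proof is correct. Its engine, testing $X_n$ against $M$ through the disk $D^n(M)$ via $\Ext^1_{\ch}(D^n(M),X)\cong\Ext^1_R(M,X_n)$, is exactly what sits behind the paper's argument. Where you differ is in how you certify $D^n(M)\in{}^\bot\textnormal{w}\mathcal{DI}_{\textnormal{C}}$.

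The paper uses Corollary~\ref{cor-weak-degree}, giving $\textnormal{w}\mathcal{DI}_{\textnormal{C}}\subseteq\textnormal{C}(\textnormal{w}\class{DI})\subseteq\textnormal{C}(\class{Y})$ with $\class{Y}=({}^\bot\textnormal{w}\class{DI})^\bot$. It combines this with the general fact that $(\leftperp{\textnormal{C}(\class{Y})},\textnormal{C}(\class{Y}))$ is a cotorsion pair in $\ch$ for any cotorsion pair $(\class{X},\class{Y})$ in $\rmod$. The ``easy check'' of that fact is precisely your disk computation. Applying ${}^\bot(-)$ and then $(-)^\bot$ to the inclusion finishes the proof.

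You instead invoke Corollary~\ref{corollary-left-perp}, which forces the two side checks $\class{FI}\subseteq\textnormal{w}\class{DI}$ and $\class{DI}\subseteq\textnormal{w}\class{DI}$. Both checks are right, but this is a detour. Corollary~\ref{cor-weak-degree} already gives $\Ext^1_{\ch}(D^n(M),G)\cong\Ext^1_R(M,G_n)=0$ for every weakly Ding injective complex $G$ in one line.

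Citing Corollary~\ref{corollary-left-perp} also makes your argument formally depend on Lemma~\ref{lemma-FP-cot-comp}, and hence on the nontrivial equality $\dgclass{FP}=\textnormal{C}(\class{FP})$. That equality is irrelevant for a bounded complex like $D^n(M)$: for $B\in\tilclass{FI}$ one has $\Ext^1_{\ch}(D^n(M),B)\cong\Ext^1_R(M,B_n)=0$ directly.

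Your version buys an explicit proof of the step the paper leaves to the reader. The paper's version buys brevity and a principle that applies to any cotorsion pair in $\rmod$.
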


\begin{proof}
By Corollary~\ref{cor-weak-degree} we have  $\textnormal{w}\mathcal{DI}_{\textnormal{C}} \subseteq \textnormal{C}(\textnormal{w}\class{DI})$. 
Thus $\textnormal{w}\mathcal{DI}_{\textnormal{C}}  \subseteq \textnormal{C}[({}^\bot\textnormal{w}\class{DI})^\bot]$.
In general, if $(\class{X}, \class{Y})$ is a cotorsion pair in $\rmod$ then it is easy to check that $(\leftperp{\textnormal{C}(\class{Y})}, \textnormal{C}(\class{Y}))$ is a cotorsion pair in $\ch$.
So since $({}^\bot\textnormal{w}\class{DI},  ({}^\bot\textnormal{w}\class{DI})^\bot)$ is a cotorsion pair in  $\rmod$, it follows that $ ({}^\bot\textnormal{w}\mathcal{DI}_{\textnormal{C}})^\bot  \subseteq \textnormal{C}[({}^\bot\textnormal{w}\class{DI})^\bot]$.
\end{proof}

\begin{proposition}\label{prop-left-right-perp}
$({}^\bot\textnormal{w}\mathcal{DI}_{\textnormal{C}})^\bot   = \textnormal{C}[({}^\bot\textnormal{w}\class{DI})^\bot]$. \\
 In  words,   the left-right-perp to the class of all  weakly Ding injective complexes is precisely the class of all complexes having components in the left-right-perp to the class of all  weakly Ding injective modules. 
\end{proposition}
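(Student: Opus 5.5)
The equality follows by combining the two inclusions already established, so the plan is a double containment.

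For the inclusion $\textnormal{C}[({}^\bot\textnormal{w}\class{DI})^\bot] \subseteq ({}^\bot\textnormal{w}\mathcal{DI}_{\textnormal{C}})^\bot$, take a complex $X$ with each $X_n \in ({}^\bot\textnormal{w}\class{DI})^\bot$. Lemma~\ref{lemma-dw-in-perp-perp} then gives $X \in ({}^\bot\textnormal{w}\mathcal{DI}_{\textnormal{C}})^\bot$. Its proof runs as follows:
\begin{itemize}
\item The complete cotorsion pair $({}^\bot \textnormal{C}(\mathcal{DI}), \textnormal{C}(\mathcal{DI}))$ gives a special precover $0 \to D \to F \to X \to 0$.
\item Degreewise, $F_n$ lies in ${}^\bot\class{DI} \mycap ({}^\bot\textnormal{w}\class{DI})^\bot$, hence is FP-injective. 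Being coacyclic, $F$ is then an FP-injective complex, as in Lemma~\ref{lemma-Hom-exact}.
\item For $Y\in {}^\bot\textnormal{w}\mathcal{DI}_{\textnormal{C}} = \textnormal{C}(\mathcal{FP})\mycap{}^\bot\textnormal{C}(\mathcal{DI})$, the long exact $\Ext$ sequence squeezes $\Ext^1(Y,X)$ between $\Ext^1(Y,F)=0$ and $\Ext^2(Y,D)=0$.
\end{itemize}
The vanishing of these two groups uses Lemma~\ref{lemma-FP-cot-comp} and the hereditary property of the Ding injective cotorsion pair, respectively.

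For the reverse inclusion, take $X \in ({}^\bot\textnormal{w}\mathcal{DI}_{\textnormal{C}})^\bot$ and apply Lemma~\ref{lemma-perp-perp-in-dw}. By Corollary~\ref{cor-weak-degree}, $\textnormal{w}\mathcal{DI}_{\textnormal{C}} \subseteq \textnormal{C}(\textnormal{w}\class{DI}) \subseteq \textnormal{C}[({}^\bot\textnormal{w}\class{DI})^\bot]$. For any cotorsion pair $(\class{X},\class{Y})$ in $\rmod$, the class $\textnormal{C}(\class{Y})$ is the right half of the cotorsion pair $({}^\bot\textnormal{C}(\class{Y}),\textnormal{C}(\class{Y}))$ in $\ch$, so it equals its own left-right-perp. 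Taking left-right-perps of the inclusion above, which is monotone, gives $({}^\bot\textnormal{w}\mathcal{DI}_{\textnormal{C}})^\bot \subseteq \textnormal{C}[({}^\bot\textnormal{w}\class{DI})^\bot]$.

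The main obstacle is the general fact that $({}^\bot\textnormal{C}(\class{Y}),\textnormal{C}(\class{Y}))$ is a cotorsion pair, specifically that $({}^\bot\textnormal{C}(\class{Y}))^\perp \subseteq \textnormal{C}(\class{Y})$. To prove it, I would use the disk complexes $D^{n}(A)$ for $A\in\class{X}$. The adjunction isomorphism $\Ext^1_{\ch}(D^{n}(A),Z)\cong \Ext^1_R(A,Z_n)$ holds, which can be checked directly on short exact sequences, since degreewise split extensions of a disk are trivial.
\begin{itemize}
\item Taking $Z \in \textnormal{C}(\class{Y})$ shows $D^n(A) \in {}^\bot\textnormal{C}(\class{Y})$.
\item Taking $Z$ in $({}^\bot\textnormal{C}(\class{Y}))^\perp$ then gives $\Ext^1_R(A,Z_n)=0$ for all $A \in \class{X}$, hence $Z_n\in\class{Y}$.
\end{itemize}
With both inclusions established, the proposition follows.
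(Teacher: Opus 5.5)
Your proposal is correct and follows the paper's proof: the paper establishes the equality as the double containment given by Lemma~\ref{lemma-dw-in-perp-perp} and Lemma~\ref{lemma-perp-perp-in-dw}, and you reproduce the arguments of both lemmas faithfully. Your disk-complex argument via $\Ext^1_{\ch}(D^n(A),Z)\cong\Ext^1_R(A,Z_n)$ correctly supplies the fact that $({}^\bot\textnormal{C}(\class{Y}),\textnormal{C}(\class{Y}))$ is a cotorsion pair, which the paper only asserts is easy to check.
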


\begin{proof}
    By Lemma~\ref{lemma-dw-in-perp-perp} and Lemma~\ref{lemma-perp-perp-in-dw}.
\end{proof}

\begin{theorem}\label{them-cot-pair}
The class $\textnormal{w}\mathcal{DI}_{\textnormal{C}}$ of weakly Ding injective complexes generates an hereditary abelian model structure in  $\ch$, 
   $$\mathfrak{M} =  (\textnormal{C}(\mathcal{FP}) , {}^\bot \textnormal{C}(\mathcal{DI}) ,   \textnormal{C}[({}^\bot{\textnormal{w}\mathcal{DI}})^\bot]).$$
In particular, $(\textnormal{C}(\mathcal{FP}) \mycap {}^\bot \textnormal{C}(\mathcal{DI}) ,   \textnormal{C}[({}^\bot{\textnormal{w}\mathcal{DI}})^\bot])$ is a complete hereditary cotorsion pair. Moreover, we have $(^\bot\textnormal{w}\mathcal{DI}_{\textnormal{C}})^\bot = \textnormal{C}[(^\bot{\textnormal{w}\mathcal{DI}})^\bot]$, and a complex $Y$ is in this class if and only if there is a short exact sequence  $0 \rightarrow D \rightarrow F \rightarrow Y \rightarrow 0$, with $D$ a Ding injective complex and $F$ an FP-injective complex. 
\end{theorem}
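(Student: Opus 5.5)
The plan is to use Hovey's correspondence in the form that builds a hereditary abelian model structure from two compatible complete hereditary cotorsion pairs. The two pairs are
$$(\class{Q},\tilclass{R}) := (\textnormal{C}(\mathcal{FP})\mycap {}^\bot \textnormal{C}(\mathcal{DI}),\ \textnormal{C}[({}^\bot\textnormal{w}\mathcal{DI})^\bot])$$
and
$$(\tilclass{Q},\class{R}) := (\textnormal{C}(\mathcal{FP}),\ \tilclass{FI}),$$
with $\class{W} := {}^\bot\textnormal{C}(\mathcal{DI})$ as the intended class of trivial objects.

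\textbf{The first pair.}
1. By Corollary~\ref{corollary-left-perp} and Proposition~\ref{prop-left-right-perp}, $\class{Q} = {}^\bot\textnormal{w}\mathcal{DI}_{\textnormal{C}}$ and $\tilclass{R} = ({}^\bot\textnormal{w}\mathcal{DI}_{\textnormal{C}})^\bot$.
2. So this is the cotorsion pair generated by $\textnormal{w}\mathcal{DI}_{\textnormal{C}}$, and it is a cotorsion pair.
3. \emph{Completeness.} I would use that both $({}^\bot \textnormal{C}(\mathcal{DI}),\textnormal{C}(\mathcal{DI}))$ and $(\textnormal{C}(\mathcal{FP}),\tilclass{FI})$ are cogenerated by sets. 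The first is complete, as used in Lemma~\ref{lemma-Hom-exact}, and the paper asserts it is cogenerated by a set. The second is cogenerated by the finitely presented complexes (Lemma~\ref{lemma-FP-cot-comp}). The union of these two sets then cogenerates a pair whose left class is $\class{Q}$, by Corollary~\ref{corollary-left-perp}. By Eklof--Trlifaj, this pair is complete.
4. If set-cogeneration of the Ding pair is not available, I would fall back on the deconstructibility of ${}^\bot\textnormal{C}(\mathcal{DI})$ and of $\textnormal{C}(\mathcal{FP})$. Their intersection is then deconstructible, which again gives completeness.
5. \emph{Hereditary.} The intersection of two classes that are each closed under kernels of epimorphisms is again so closed.

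\textbf{Compatibility with $\class{W}$.} Three conditions remain.
1. $\class{Q}\mycap\class{W} = \class{Q}$. Here I need $\class{Q}\mycap\class{W} = \tilclass{Q}\mycap\class{W}$, which is immediate since $\class{Q}=\tilclass{Q}\mycap\class{W}$.
2. $\tilclass{R}\mycap\class{W} = \class{R}$, i.e.\ $\textnormal{C}[({}^\bot\textnormal{w}\mathcal{DI})^\bot]\mycap{}^\bot\textnormal{C}(\mathcal{DI}) = \tilclass{FI}$.
   For ``$\supseteq$'': FP-injective complexes are weakly Ding injective (take a trivial $\mathbb{E}$), so $\tilclass{FI}\subseteq\tilclass{R}$. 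Also $\tilclass{FI}\subseteq\class{W}$, since $\Ext^1(\tilclass{FI},\textnormal{C}(\mathcal{DI}))=0$. I would get the latter by resolving $\textnormal{C}(\mathcal{DI})$ objects by injective complexes in a $\Hom(\tilclass{FI},-)$-exact way, using Lemma~\ref{lemma-DI-complexes} and dimension shifting.
   For ``$\subseteq$'': this is exactly the argument in the proof of Lemma~\ref{lemma-Hom-exact}. Componentwise one gets $F_n\in{}^\bot\mathcal{DI}\mycap({}^\bot\textnormal{w}\mathcal{DI})^\bot$, so $F_n$ is FP-injective by \cite[Lemma~3]{iacob-weakly-Ding-3}. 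Then $\Ext^1(F,I)=0$ for complexes of injectives, so the complex is coacyclic and hence in $\tilclass{FI}$ by \cite[Prop.~6.11]{stovicek-purity}.
3. $\class{W}$ must be thick. Since $\class{W}$ is the left half of a complete hereditary cotorsion pair, it is closed under extensions and kernels of epimorphisms. For cokernels of monomorphisms I would argue directly: take $0\to A\to B\to C\to 0$ with $A,B\in\class{W}$ and $D\in\textnormal{C}(\mathcal{DI})$. Then $\Ext^1(A,D)=0$ and $\Ext^2(C,D)=0$ by heredity, so the long exact sequence gives $\Ext^1(C,D)=0$ as needed. Alternatively, Gillespie's criterion from two complete hereditary cotorsion pairs with $\class{Q}\subseteq\tilclass{Q}$, $\tilclass{R}\supseteq\class{R}$ and $\class{Q}\mycap\tilclass{R}=\tilclass{Q}\mycap\class{R}$ yields thickness automatically. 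There I must check $\textnormal{C}(\mathcal{FP})\mycap\tilclass{FI} = \class{Q}\mycap\tilclass{R}$, which follows from step 2 intersected with $\textnormal{C}(\mathcal{FP})$.

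\textbf{The final statement.} The forward direction is from the proof of Lemma~\ref{lemma-Hom-exact}. For the converse, suppose $0\to D\to F\to Y\to 0$ is exact with $D$ Ding injective and $F$ FP-injective. Both $D$ and $F$ lie in $\tilclass{R}$, which is closed under cokernels of monomorphisms by heredity, so $Y\in\tilclass{R}$. The main obstacle is completeness of $(\class{Q},\tilclass{R})$, i.e.\ the set-cogeneration or deconstructibility of ${}^\bot\textnormal{C}(\mathcal{DI})$, which I would import from known results on Ding injective complexes.
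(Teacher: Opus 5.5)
You check Hovey's compatibility conditions for $\class{W}={}^\perp\textnormal{C}(\mathcal{DI})$ by hand. That is a legitimate unpacking of the general result the paper simply cites, \cite[Thms.~3.2/3.4]{gillespie-FP-pro-injective}: an injective cotorsion pair $(\class{W},\textnormal{C}(\mathcal{DI}))$ plus a complete hereditary pair $(\textnormal{C}(\mathcal{FP}),\tilclass{FI})$ with $\tilclass{FI}\subseteq\class{W}$ give the Hovey triple, and Corollary~\ref{corollary-left-perp} and Proposition~\ref{prop-left-right-perp} then identify the classes. Your checks of heredity, of $\tilclass{Q}\cap\class{W}=\class{Q}$, of $\tilclass{R}\cap\class{W}=\tilclass{FI}$, and of the final characterization are fine. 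However, two steps fail as written.

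\textbf{Completeness.} If $\class{S}_1^\perp=\textnormal{C}(\mathcal{DI})$ and $\class{S}_2^\perp=\tilclass{FI}$, then $(\class{S}_1\cup\class{S}_2)^\perp=\textnormal{C}(\mathcal{DI})\cap\tilclass{FI}$. So a union of cogenerating sets intersects the \emph{right} classes. The resulting left class contains both ${}^\perp\textnormal{C}(\mathcal{DI})$ and $\textnormal{C}(\mathcal{FP})$, not just their intersection. In fact $\textnormal{C}(\mathcal{DI})\cap\tilclass{FI}$ is only the injective complexes, so you get the trivial pair. Your deconstructibility fallback would work if you import deconstructibility of $\class{W}$, but no set theory is needed:
\begin{itemize}
\item Given $X$, take $0\to I\to W\to X\to 0$ with $W\in\class{W}$ and $I\in\textnormal{C}(\mathcal{DI})$.
\item Then take $0\to R\to Q\to W\to 0$ with $Q\in\textnormal{C}(\mathcal{FP})$ and $R\in\tilclass{FI}$.
\item Since $\tilclass{FI}\subseteq\class{W}$ and $\class{W}$ is closed under extensions, $Q\in\class{Q}$.
\item The kernel of $Q\to X$ is an extension of $I$ by $R$, and both lie in $\class{Q}^\perp$.
\end{itemize}
This gives special $\class{Q}$-precovers, and Salce's argument finishes. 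So completeness is not the real obstacle.

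\textbf{Thickness.} In the long exact sequence, $\Ext^1(C,D)$ sits between $\Hom(A,D)$ and $\Ext^1(B,D)$. The vanishing of $\Ext^1(A,D)$ and $\Ext^2(C,D)$ therefore says nothing about it. Also, ``$\Ext^2(C,D)=0$ by heredity'' already presupposes $C\in\class{W}$. Left classes of hereditary pairs are not closed under cokernels of monomorphisms in general; projectives are a counterexample.

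The missing input is specific to Ding injectives. Write $0\to D'\to E\to D\to 0$ with $E$ an injective complex and $D'\in\textnormal{C}(\mathcal{DI})$, taken from a complete resolution. Then $\Ext^1(C,D)\cong\Ext^2(C,D')$, which is squeezed between $\Ext^1(A,D')=0$ and $\Ext^2(B,D')=0$. Alternatively, cite that $(\class{W},\textnormal{C}(\mathcal{DI}))$ is an injective cotorsion pair, which is exactly what the paper uses.

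Your other fallback, the two-pair criterion, only yields \emph{some} thick class $\class{W}'$. You must still show $\class{W}'=\class{W}$. This is doable:
\begin{itemize}
\item Any $X\in\class{W}'$ is the kernel of an epimorphism from an object of $\tilclass{FI}$ onto an object of $\class{Q}$, so $X\in\class{W}$ by heredity.
\item Conversely, for $X\in\class{W}$, a special $\class{Q}$-precover $0\to R'\to Q''\to X\to 0$ has $R'\in\tilclass{R}\cap\class{W}=\tilclass{FI}$ by your step~2, so $X\in\class{W}'$.
\end{itemize}
You left this step out. Finally, once thickness is available, $\tilclass{R}\cap\class{W}=\tilclass{FI}$ follows formally by splitting a special $\tilclass{FI}$-preenvelope. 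So the appeal to \cite[Lemma~3]{iacob-weakly-Ding-3} and \cite[Prop.~6.11]{stovicek-purity} becomes optional.
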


\begin{proof}
This follows from~\cite[Thems.~3.2/3.4]{gillespie-FP-pro-injective}, along with what we've shown above. In more detail, the Ding injective complexes of Lemma~\ref{lemma-DI-complexes} are the right side of an injective cotorsion pair, $(\leftperp{\textnormal{C}(\class{DI})}, \textnormal{C}(\class{DI}))$, while the FP-injective cotorsion pair of 
Lemma~\ref{lemma-FP-cot-comp} is hereditary and has $\tilclass{FI} \subseteq \leftperp{\textnormal{C}(\class{DI})}$. So  \cite[Thems.~3.2/3.4]{gillespie-FP-pro-injective} gives us the model structue $\mathfrak{M}$, and the class of fibrant objects is as described because of  Proposition~\ref{prop-left-right-perp}    and Corollary~\ref{corollary-left-perp}.
\end{proof}

\subsection{Extension closed weakly Ding injectives}
By~\cite[Prop.~14]{iacob-weakly-Ding-3},  we have $\textnormal{w}\mathcal{DI} = ({}^\bot\textnormal{w}\mathcal{DI})^\bot$ whenever  the class of weakly Ding injective modules is closed under extensions. We know examples of such rings, but we don't know whether or not it holds for all rings. But for the case that it holds, we can summarize the previous results as follows. 

\begin{corollary}
 Assume that the class $\textnormal{w}\mathcal{DI}$ of weakly Ding injective modules is closed under extensions. Then each of the following hold:
 \begin{enumerate}
 \item  For any complex $X$ of weakly Ding injective modules,  there is an exact and $\Hom (\widetilde{\mathcal{FI}},-)$ exact complex   
    $\cdots \rightarrow F_2 \rightarrow F_1 \rightarrow F_0 \rightarrow X \rightarrow 0$  with all of the $F_j$ being FP-injective complexes.  
\item $({}^\bot\textnormal{w}\mathcal{DI}_{\textnormal{C}})^\bot = \textnormal{C}(\textnormal{w}\class{DI})$. \\
 In  words,   the left-right-perp to the class of all  weakly Ding injective complexes is merely the class of all complexes having weakly Ding injective components. 
 \item   $(^\bot\textnormal{w}\mathcal{DI}_{\textnormal{C}}, \textnormal{C}({\textnormal{w}\mathcal{DI}})) = 
 (\textnormal{C}(\mathcal{FP}) \mycap {}^\bot \textnormal{C}(\mathcal{DI}) ,   \textnormal{C}(\textnormal{w}\class{DI}))$ is a complete hereditary cotorsion pair in $\ch$, and we even have an abelian model structure, $$\mathfrak{M} =  (\textnormal{C}(\mathcal{FP}) , {}^\bot \textnormal{C}(\mathcal{DI}) ,   \textnormal{C}(\textnormal{w}\class{DI})).$$
\end{enumerate}
\end{corollary}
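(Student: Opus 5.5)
The plan is to substitute the equality $\textnormal{w}\mathcal{DI} = ({}^\bot\textnormal{w}\mathcal{DI})^\bot$ into the results already established. By \cite[Prop.~14]{iacob-weakly-Ding-3}, this equality holds exactly under our hypothesis that $\textnormal{w}\mathcal{DI}$ is closed under extensions. Consequently
$$\textnormal{C}[({}^\bot\textnormal{w}\class{DI})^\bot] = \textnormal{C}(\textnormal{w}\class{DI}),$$
and all three items should follow by rewriting Lemma~\ref{lemma-Hom-exact}, Proposition~\ref{prop-left-right-perp} and Theorem~\ref{them-cot-pair}.

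For item (1), take a complex $X$ of weakly Ding injective modules. Each $X_n$ then lies in $({}^\bot\textnormal{w}\mathcal{DI})^\bot$, so Lemma~\ref{lemma-Hom-exact} applies directly and gives the required exact, $\Hom(\widetilde{\mathcal{FI}},-)$ exact left resolution by FP-injective complexes.

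For item (2), Proposition~\ref{prop-left-right-perp} gives $({}^\bot\textnormal{w}\mathcal{DI}_{\textnormal{C}})^\bot = \textnormal{C}[({}^\bot\textnormal{w}\class{DI})^\bot]$. Rewriting the right-hand side with the displayed equality yields $\textnormal{C}(\textnormal{w}\class{DI})$.

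For item (3), Theorem~\ref{them-cot-pair} says that
$$(\textnormal{C}(\mathcal{FP}) \mycap {}^\bot \textnormal{C}(\mathcal{DI}),\ \textnormal{C}[({}^\bot\textnormal{w}\mathcal{DI})^\bot])$$
is a complete hereditary cotorsion pair and provides the model structure $\mathfrak{M}$. By Corollary~\ref{corollary-left-perp}, the left class equals ${}^\bot\textnormal{w}\mathcal{DI}_{\textnormal{C}}$. By item (2), the right class equals $\textnormal{C}(\textnormal{w}\class{DI})$. Substituting both identifications gives the stated cotorsion pair and the model structure $(\textnormal{C}(\mathcal{FP}), {}^\bot\textnormal{C}(\mathcal{DI}), \textnormal{C}(\textnormal{w}\class{DI}))$.

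No real obstacle is expected. The only substantive input is the external result \cite[Prop.~14]{iacob-weakly-Ding-3}; everything else is substitution into results already proved in the paper.
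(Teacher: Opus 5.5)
Your proposal is correct and follows essentially the same route as the paper. The paper also invokes \cite[Prop.~14]{iacob-weakly-Ding-3} to get $\textnormal{w}\mathcal{DI} = ({}^\bot\textnormal{w}\mathcal{DI})^\bot$, and then reads off (1), (2), (3) from Lemma~\ref{lemma-Hom-exact}, Proposition~\ref{prop-left-right-perp} and Theorem~\ref{them-cot-pair}, respectively. As you implicitly noticed, item (1) uses only the trivial inclusion $\textnormal{w}\mathcal{DI} \subseteq ({}^\bot\textnormal{w}\mathcal{DI})^\bot$, so it does not need the extension-closure hypothesis.
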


\begin{proof}
Statement (1) is by Lemma~\ref{lemma-Hom-exact}; statement (2) is by Proposition~\ref{prop-left-right-perp}; and statement (3) is by Theorem~\ref{them-cot-pair}.
\end{proof}

\section{The stable chain complex category of a Ding-Chen ring}\label{section-Ding-Chen}

Throughout this section, unless explicitly stated otherwise, we let $R$ be  a Ding-Chen ring. 
We recall that this means $R$  is a two-sided coherent ring with finite self FP-injective dimensions,   $FP\textnormal{-id}(R_R)$ and $FP\textnormal{-id}({}_RR)$.  Ding and Chen showed  in~\cite[Cor.~3.18]{ding and chen 93} that such rings satisfy  $FP\textnormal{-id}(R_R) = FP\textnormal{-id}({}_RR) = d$ for some non-negative integer $d$. 
Moreover, they show that the following statements about the flat and  FP-injective dimensions of any given  $R$-module $M$ are equivalent: 
$$\circ \,  \textnormal{fd}(M)  < \infty \ \ \ \ \  \circ \,  \textnormal{fd}(M)  \leq d    \ \ \ \ \    \circ \,   \textnormal{FP-id}(M) < \infty  \ \ \ \ \   \circ \,   \textnormal{FP-id}(M) \leq d.$$
We will let $\class{W}$ denote the class of all such modules $M$. This is precisely the class of trivial objects in the Gorenstein projective (resp. injective, resp. flat, resp. FP-injective) model structures for the stable module category of $R$. See~\cite{gillespie-FP-pro-injective}.

\begin{notation}\label{notation-relative-G-inj}
We will  use the following facts and notation throughout this section. The statements asserted in (2), (3), and (4) can be found in~\cite[Them~1.1]{gillespie-ding-modules}.
\begin{enumerate}
\item $\class{Z}$ denotes the class of all modules that are equal to a cycle of some exact complex of FP-injective modules. We have $\class{Z} = \rightperp{(\leftperp{\textnormal{w}\class{DI}})}$, by~\cite[\S8]{gillespie-FP-pro-injective}.
\item $\class{GI} = \class{DI}$ denotes the class of all Gorenstein injective (= Ding injective) modules. These are nothing more than the  modules that are  equal to a cycle module of  some exact complex of  injective modules. 
\item $\class{GP} = \class{DP}$ denotes the class of all Gorenstein projective (= Ding projective) modules. These are nothing more than the  modules that are equal to a cycle module of some exact complex of projective modules. 
\item $\class{GF} = \class{DF}$ denotes the class of all Gorenstein flat (= Ding  flat) modules. These are nothing more than the modules that are equal to a cycle module of some exact complex of flat modules. 
\item $\tilclass{W}$ denotes the class of all exact complexes with cycles in $\class{W}$. It is not difficult to see that this coincides with the class of all complexes having finite flat dimension, equivalently, finite FP-injective dimension. 
\end{enumerate}
\end{notation}

The next lemma is that statements (2)--(4) above have analogs for chain complexes. The analog of (1) will appear in Theorem~\ref{theorem-FP-model}(1).

\begin{lemma}\label{lemma-Goren-complexes}
Let $R$ be Ding-Chen. Then a Ding injective (resp. projective, resp. flat)  chain complex is nothing more than a Gorenstein injective  (resp. projective, resp. flat) complex, 
 and these are nothing more than complexes that are degreewise Gorenstein injective (resp. projective, resp. flat) modules. 
This is summarized  in our notation as
\begin{enumerate}\setcounter{enumi}{5}
\item $\class{DI}_{\textnormal{C}} = \class{GI}_{\textnormal{C}} = \textnormal{C}(\class{GI}) =  \textnormal{C}(\class{DI})$. \\
\item $\class{DP}_{\textnormal{C}} = \class{GP}_{\textnormal{C}} = \textnormal{C}(\class{GP}) =  \textnormal{C}(\class{DP})$. \\
\item $\class{DF}_{\textnormal{C}} = \class{GF}_{\textnormal{C}} = \textnormal{C}(\class{GF}) =  \textnormal{C}(\class{DF})$.
\end{enumerate}
\end{lemma}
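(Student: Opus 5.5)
We reduce each of the three statements to known results. The last equality in each chain comes from Notation~\ref{notation-relative-G-inj}(2)--(4), since over a Ding-Chen ring $\class{GI}=\class{DI}$, $\class{GP}=\class{DP}$ and $\class{GF}=\class{DF}$ as classes of modules. For (6), Lemma~\ref{lemma-DI-complexes} gives $\class{DI}_{\textnormal{C}}=\textnormal{C}(\class{DI})$ over any ring. What remains is to identify $\class{GI}_{\textnormal{C}}$, $\class{GP}_{\textnormal{C}}$ and $\class{GF}_{\textnormal{C}}$ with $\textnormal{C}(\class{GI})$, $\textnormal{C}(\class{GP})$ and $\textnormal{C}(\class{GF})$, and to obtain the projective and flat analogues of Lemma~\ref{lemma-DI-complexes}.

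For Gorenstein complexes, the degreewise characterizations are classical:
\begin{itemize}
\item over any ring, a complex is Gorenstein injective (resp. projective) if and only if it is degreewise Gorenstein injective (resp. projective), by Yang–Liu and Enochs–García Rozas type results;
\item over a coherent ring, and in fact over any ring by results of Yang–Liang and Šaroch–Šťovíček, a complex is Gorenstein flat if and only if it is degreewise Gorenstein flat.
\end{itemize}
I would cite these. This gives $\class{GI}_{\textnormal{C}}=\textnormal{C}(\class{GI})$, $\class{GP}_{\textnormal{C}}=\textnormal{C}(\class{GP})$ and $\class{GF}_{\textnormal{C}}=\textnormal{C}(\class{GF})$.

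For the Ding versions, the inclusions $\class{DP}_{\textnormal{C}}\subseteq\class{GP}_{\textnormal{C}}$ and $\class{DF}_{\textnormal{C}}\subseteq\class{GF}_{\textnormal{C}}$ are immediate from the definitions. Similarly, $\class{DI}_{\textnormal{C}}\subseteq\class{GI}_{\textnormal{C}}$ holds because $\Hom(F,-)$-exactness for all FP-injective $F$ includes all injective $F$. For the reverse inclusion I would argue at the chain-complex level, using that $R$ is Ding-Chen. Take a complete resolution $\mathbb{E}$ by injective complexes of a Gorenstein injective complex. We must show $\Hom(F,\mathbb{E})$ is exact for every FP-injective complex $F\in\tilclass{FI}$.

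The key input is that $F$ has finite injective dimension in $\ch$, or at least that $\Ext^{i}_{\ch}(F,G)=0$ for $G$ Gorenstein injective and $i\geq 1$. To see this:
\begin{itemize}
\item Each cycle $Z_nF$ is FP-injective, so $F\in\tilclass{W}$.
\item Over a Ding-Chen ring, FP-injective modules have injective dimension $\le d$ relative to Gorenstein injectives, i.e. $\Ext^{\ge1}_R(A,G)=0$ for $A\in\class{W}$ and $G\in\class{GI}$, using $\class{W}={}^\perp\class{GI}$ from the Gorenstein injective model structure.
\item Lifting to complexes, $\tilclass{W}={}^\perp\textnormal{C}(\class{GI})$ in $\ch$, because $(\tilclass{W},\textnormal{C}(\class{GI}))$ is the injective cotorsion pair of $\mathfrak{M}_{inj}$; this follows from the Gorenstein injective module model structure via the standard degreewise lifting.
\end{itemize}
Then $\Ext^1(F,Z_n\mathbb{E})=0$ for all $n$, which gives the exactness of $\Hom(F,\mathbb{E})$. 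The projective and flat cases are dual. For $\class{DP}$, we need $\Ext^{\ge1}(P,F)=0$ for $P$ Gorenstein projective and $F$ a flat complex, since flat complexes lie in $\tilclass{W}$. For $\class{DF}$, we need $\textnormal{Tor}$-exactness against complexes of FP-injective type, via $\class{W}$ and character duality.

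The main obstacle is establishing $\tilclass{W}={}^\perp\textnormal{C}(\class{GI})$ (and its dual $\textnormal{C}(\class{GP})^\perp=\tilclass{W}$) in $\ch$. I would first try to deduce it from the corresponding module statements together with the general fact that, for a cotorsion pair $(\class{A},\class{B})$ with $\class{A}$ thick and containing projectives, ${}^\perp\textnormal{C}(\class{B})=\tilclass{A}$. If that fails, I would simply cite the known Ding-Chen complex results (Gillespie's "Model structures on modules over Ding-Chen rings" and the Ding-Chen complex models paper) for these equalities.
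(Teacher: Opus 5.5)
Your treatment of (6) is fine, and it is already finished by chaining: $\class{DI}_{\textnormal{C}}=\textnormal{C}(\class{DI})$ by Lemma~\ref{lemma-DI-complexes}, $\textnormal{C}(\class{DI})=\textnormal{C}(\class{GI})$ at the module level, and $\textnormal{C}(\class{GI})=\class{GI}_{\textnormal{C}}$ by Yang--Liu. So your $\Ext$ argument for $\class{GI}_{\textnormal{C}}\subseteq\class{DI}_{\textnormal{C}}$ is not needed. The real content is in (7) and (8), the projective and flat analogues of Lemma~\ref{lemma-DI-complexes}. The paper obtains these by citation. \cite[Theorems~1.1/1.2]{gillespie-ding-modules} says that over a Ding-Chen ring a complex is Ding projective (resp.\ Ding flat) iff it is degreewise so. This is combined with \cite[Theorem~2.2]{yang-liu-gorenstein-complexes}, or with \cite[Cor.~8.1, 8.3]{gillespie-recollement}, for the Gorenstein side. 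Your substitute argument instead rests on the orthogonality $\tilclass{W}\subseteq\rightperp{\textnormal{C}(\class{GP})}$ in $\ch$ and its flat analogue. Granting that orthogonality, your complete-resolution argument does work, but it is exactly the step you leave unproved.

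Neither of your routes to that orthogonality works as stated. The general principle you propose is false. For a cotorsion pair $(\class{A},\class{B})$ the standard lift is $(\tilclass{A},\dgclass{B})$, so $\leftperp{\textnormal{C}(\class{B})}=\tilclass{A}$ holds precisely when $\textnormal{C}(\class{B})=\dgclass{B}$. This can fail even when $\class{A}$ is thick and contains the projectives. Take $(\textnormal{All},\textnormal{Inj})$ over $\mathbb{Z}/4$ and let $X=(\cdots\to\mathbb{Z}/4\xrightarrow{2}\mathbb{Z}/4\xrightarrow{2}\cdots)$. This complex is exact and degreewise injective. It is not contractible, since its cycles $\mathbb{Z}/2$ are not summands of $\mathbb{Z}/4$. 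Extensions by complexes of injectives are degreewise split, so $\Ext^1$ of $X$ against a suitable shift of $X$ is a group of homotopy classes containing the nonzero class of $1_X$. Hence $X\notin\leftperp{\textnormal{C}(\textnormal{Inj})}$.

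For $(\class{W},\class{GI})$ and $(\class{GP},\class{W})$ the needed equalities $\textnormal{C}(\class{GI})=\dgclass{GI}$ and $\textnormal{C}(\class{GP})=\dgclass{GP}$ do hold over a Ding-Chen ring. But they are the nontrivial input where the Ding-Chen hypothesis enters, not a formal degreewise lifting. Your fallback of reading them off from $\mathfrak{M}_{inj}$, $\mathfrak{M}_{proj}$, $\mathfrak{M}_{flat}$ is circular here. Proposition~\ref{prop-i-p-f-models} obtains those model structures, with (co)fibrant classes $\textnormal{C}(\class{GI})$, $\textnormal{C}(\class{GP})$, $\textnormal{C}(\class{GF})$, by combining \cite{Ding-Chen-complex-models} with the very lemma you are proving. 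You would need an independent proof of these orthogonalities. The flat case (``Tor-exactness via $\class{W}$ and character duality'') is only gestured at, not argued. The simplest repair is to cite the degreewise characterization of Ding projective and Ding flat complexes over Ding-Chen rings directly, as the paper does.
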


\begin{proof}
This follows from~\cite[Theorems~1.1/1.2]{gillespie-ding-modules} 
along with~\cite[Theorem~2.2]{yang-liu-gorenstein-complexes}, or~\cite[Corollaries~8.1 and 8.3]{gillespie-recollement}. 
\end{proof}

\begin{proposition}\label{prop-i-p-f-models}
Let $R$ be Ding-Chen. 
We have the following abelian model structures on $\ch$, each having $\tilclass{W}$ as its class of trivial objects, and with (co)fibrant complexes as described in Lemma~\ref{lemma-Goren-complexes}:
\begin{itemize}
\item The  injective model structure, $\mathfrak{M}_{inj} = (All, \tilclass{W},  \class{GI}_{\textnormal{C}})$. Its homotopy category is triangle equivalent to  $\textnormal{St}(\textnormal{C}(\class{GI}))$,   the stable category of the Frobenius category $\class{GI}_{\textnormal{C}} = \textnormal{C}(\class{GI})$. 
\item The  projective model structure, $\mathfrak{M}_{proj} = ( \class{GP}_{\textnormal{C}}, \tilclass{W},  All)$.   Its homotopy category is triangle equivalent to  $\textnormal{St}(\textnormal{C}(\class{GP}))$,   the stable category  of the Frobenius category $\class{GP}_{\textnormal{C}} = \textnormal{C}(\class{GP})$.
\item The flat model structure, $\mathfrak{M}_{flat} = (\class{GF}_{\textnormal{C}}, \tilclass{W}, \textnormal{C}(Cot))$, where  $\textnormal{C}(Cot)$ is the class of all complexes of cotorsion modules. Its homotopy category is triangle equivalent to   $\textnormal{St}(\textnormal{C}(\class{GFC}))$, the stable category of the Frobenius category $\class{GF}_{\textnormal{C}} \mycap \textnormal{C}(Cot) = \textnormal{C}(\class{GFC})$.   Here, $\class{GFC}$  denotes  the class of all  Gorenstein flat-cotorsion modules in the sense of~\cite{cet-totally-flat-cot-theory}.   
\end{itemize}
\end{proposition}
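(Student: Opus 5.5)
The plan is to obtain each of the three model structures as a Hovey triple built from two compatible complete hereditary cotorsion pairs in $\ch$. In each case the class of trivial objects is $\tilclass{W}$, and the (co)fibrant classes are those identified in Lemma~\ref{lemma-Goren-complexes}. The homotopy category statements will then follow from the general fact that, for a hereditary abelian model structure, the full subcategory of cofibrant-fibrant objects is a Frobenius category whose projective-injectives are the trivially cofibrant-fibrant objects, and the homotopy category is its stable category.

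For $\mathfrak{M}_{inj}$, I would use the cotorsion pairs $(\leftperp{\textnormal{C}(\class{GI})}, \textnormal{C}(\class{GI}))$ and $(All, \textnormal{C}(\class{GI})\mycap\tilclass{W})$. The second holds because $\textnormal{C}(\class{GI})\mycap\tilclass{W}$ should be exactly the injective complexes: a Gorenstein injective module of finite flat (hence FP-injective, hence injective) dimension is injective, and an exact complex of such with injective cycles is an injective complex. Completeness of the first pair comes from Lemma~\ref{lemma-DI-complexes} and the fact that $\class{DI}$ is the right half of a complete hereditary cotorsion pair in $\rmod$; the degreewise pair $(\leftperp{\textnormal{C}(\class{Y})},\textnormal{C}(\class{Y}))$ is cogenerated by a set (disks $D^n(S)$ on a cogenerating set $S$). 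The key check is $\leftperp{\textnormal{C}(\class{GI})} = \tilclass{W}$. A complex in $\tilclass{W}$ is built from modules in $\class{W} = \leftperp{\class{GI}}$, which holds over Ding-Chen rings. One then shows exactness of $\tilclass{W}$-complexes kills $\Ext^1$ against complexes of Gorenstein injectives via a standard disk/sphere argument. Conversely, $\leftperp{\textnormal{C}(\class{GI})}$ consists of exact complexes, since it is orthogonal to all injective complexes and to shifted disks $D^n(I)$, and its cycles lie in $\class{W}$.

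Rather than redo this by hand, I would cite the known result that for any complete hereditary cotorsion pair $(\class{W},\class{GI})$ in $\rmod$ with $\class{W}$ thick, one gets $(\tilclass{W}, \textnormal{C}(\class{GI}))$ in $\ch$ (as in \cite[Cor.~8.1/8.3]{gillespie-recollement}). Hovey's correspondence then gives $\mathfrak{M}_{inj}$. The projective case is formally dual. Here one uses $(\textnormal{C}(\class{GP}),\tilclass{W})$ and $(\textnormal{C}(\class{GP})\mycap\tilclass{W}, All)$, whose left class is the projective complexes.

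The flat case is where I expect the main obstacle, since the fibrant class $\textnormal{C}(Cot)$ is not simply "all". I would pair $(\textnormal{C}(\class{GF}), \textnormal{C}(\class{GF})^\perp)$ with the flat cotorsion pair $(\tilclass{F}, \dgclass{C})$ lifted to $\textnormal{C}$-form. The goal is to verify $\textnormal{C}(\class{GF})^\perp = \tilclass{W}\mycap\textnormal{C}(Cot)$ and $\textnormal{C}(\class{GF})\mycap\tilclass{W} = \tilclass{F}$ (the flat complexes). For the latter, I would use that a Gorenstein flat module of finite flat dimension is flat and that the cycles in $\tilclass{W}$ are then flat. For the former, I would use Šaroch–Šťovíček's result that $(\class{GF}, \class{GF}^\perp)$ is complete hereditary with $\class{GF}^\perp = \class{W}\mycap Cot$ over Ding-Chen rings, together with Lemma~\ref{lemma-Goren-complexes}(8). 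Finally, the Frobenius category $\textnormal{C}(\class{GF})\mycap\textnormal{C}(Cot)$ is identified with $\textnormal{C}(\class{GFC})$ degreewise, since $\class{GFC}=\class{GF}\mycap Cot$ by \cite{cet-totally-flat-cot-theory}.
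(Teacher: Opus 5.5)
Your framework is sound: build each Hovey triple from two compatible complete hereditary cotorsion pairs, use the standard fact that the bifibrant objects of a hereditary abelian model structure form a Frobenius category whose stable category is the homotopy category, and identify $\textnormal{C}(\class{GF})\mycap\textnormal{C}(Cot)=\textnormal{C}(\class{GF}\mycap Cot)=\textnormal{C}(\class{GFC})$ degreewise. Its second half is exactly the paper's argument.

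The paper, however, takes the existence of all three model structures from~\cite[\S4]{Ding-Chen-complex-models} combined with Lemma~\ref{lemma-Goren-complexes}. You instead assemble them yourself, and your justifications fail exactly at the non-formal steps.

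The principle you attribute to~\cite{gillespie-recollement} is false as stated. $(\rmod,\textnormal{Inj})$ is a complete hereditary cotorsion pair with thick left half, yet (exact complexes, complexes of injectives) is not a cotorsion pair in general. Over $\mathbb{Z}/4$, the complex $P=(\cdots\to\mathbb{Z}/4\xrightarrow{2}\mathbb{Z}/4\xrightarrow{2}\mathbb{Z}/4\to\cdots)$ is an exact complex of injectives whose identity is not null-homotopic. Since $\Sigma P\cong P$, this gives $\Ext^1_{\ch}(P,P)\neq 0$.

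For the same reason, no ``standard disk/sphere argument'' can give $\Ext^1_{\ch}(\tilclass{W},\textnormal{C}(\class{GI}))=0$. Such extensions are degreewise split, so the claim says every chain map from a complex in $\tilclass{W}$ to a complex of Gorenstein injectives is null-homotopic, and this needs the Ding--Chen bound $d$. One way to get it:
\begin{enumerate}
\item Coresolve with the pair of Lemma~\ref{lemma-FP-cot-comp} and track cycles; each $W\in\tilclass{W}$ then has FP-injective dimension at most $d$ in $\ch$.
\item By Lemma~\ref{lemma-DI-complexes}, $\textnormal{C}(\class{GI})=\textnormal{C}(\class{DI})$ consists of Ding injective complexes, which satisfy $\Ext^i_{\ch}(\tilclass{FI},-)=0$ for $i\geq 1$.
\item Shift dimensions.
\end{enumerate}
The projective case is dual, using flat dimension in $\ch$ and Ding projective complexes.

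A smaller slip: exactness of complexes in $\leftperp{\textnormal{C}(\class{GI})}$ comes from spheres $S^n(E)$ on an injective cogenerator $E$. Disks $D^n(I)$ are injective objects of $\ch$ and impose no condition.

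The flat case has two genuine gaps.
\begin{enumerate}
\item The right half of the pair whose left half is $\textnormal{C}(\class{GF})\mycap\tilclass{W}=\tilclass{F}$ is $dg\widetilde{Cot}$, the dg-cotorsion complexes. To call it $\textnormal{C}(Cot)$ you need the theorem of Bazzoni, Cort\'es-Izurdiaga and Estrada that every complex of cotorsion modules is dg-cotorsion. ``Lifted to $\textnormal{C}$-form'' does not supply this.
\item The module-level equality $\rightperp{\class{GF}}=\class{W}\mycap Cot$ together with Lemma~\ref{lemma-Goren-complexes}(8) only yields, by testing against disks and spheres on Gorenstein flat modules, the inclusion $\rightperp{\textnormal{C}(\class{GF})}\subseteq\tilclass{W}\mycap\textnormal{C}(Cot)$. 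The reverse inclusion is again a null-homotopy statement with no formal proof. Its analogue for $(\textnormal{Proj},\rmod)$ fails over $\mathbb{Z}/4$, via the same complex $P$.
\end{enumerate}
To get the reverse inclusion, use the first point to resolve a complex in $\tilclass{W}\mycap\textnormal{C}(Cot)$ in $d$ steps by contractible complexes of flat-cotorsion modules. Against these, $\Ext^i_{\ch}(X,-)$ for $X\in\textnormal{C}(\class{GF})$ reduces to $\Ext^i_R$ between components and vanishes; then shift dimensions. Alternatively, simply cite~\cite[\S4]{Ding-Chen-complex-models}, as the paper does.
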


\begin{proof}
The existence of the three model structures follows from the work of Yang, Liu, and Liang in~\cite[\S4]{Ding-Chen-complex-models}, combined with the statements in Lemma~\ref{lemma-Goren-complexes}.  Each of the three model structures are hereditary, so by a standard result, the full subcategory of bifibrant (i.e., cofibrant and fibrant) objects is a Frobenius category whose projective-injective objects are exactly the trivially bifibrant objects. For the injective (resp. projective, resp. flat) model structure these are precisely the contractible complexes of   injective (resp. projective, resp. flat-cotorsion) modules. 

In the case of  $\mathfrak{M}_{flat} = (\class{GF}_{\textnormal{C}}, \tilclass{W}, \textnormal{C}(Cot))$,  the bifibrant objects satisfy 
$$\class{GF}_{\textnormal{C}}\mycap\textnormal{C}(Cot) = \textnormal{C}(\class{GF})\mycap\textnormal{C}(Cot) = \textnormal{C}(\class{GF}\mycap Cot) = \textnormal{C}(\class{GFC}),$$ where 
 the last equality is by~\cite[\S4 and \S5]{cet-totally-flat-cot-theory}.   So the homotopy category of $\mathfrak{M}_{flat}$ is equivalent to $\textnormal{St}(\textnormal{C}(\class{GFC}))$, the stable category of  $\textnormal{C}(\class{GFC})$.       
\end{proof}

As in~\cite{gillespie-FP-pro-injective} we say that an $R$-module (here $R$ may be any ring) is \textbf{\emph{FP-pro-injective}} if it is both FP-projective and FP-injective. Then we say that a module $M$ is  \textbf{\emph{Gorenstein FP-pro-injective}}\label{page-G-pro-inj} if  $M = Z_0X$ for some exact complex $X$ of FP-pro-injective modules having the property that both  $\Hom_R(N, X)$ and $\Hom_R(X, N)$  are also  exact whenever $N$ is another FP-pro-injective. In fact, the exactness of  $\Hom_R(X, N)$ is automatic for (left) coherent rings due to the periodicity of the class of FP-projectives; \cite[Example 4.3]{saroch-stovicek-G-flat}. Gorenstein FP-pro-injective modules also appeared  in~\cite[\S4]{cet-one-sided-Goren}  under the name of \emph{Gorenstein fp-injective} modules.

We now show that in the current case of a Ding-Chen ring $R$, 
 the abelian model structure of Theorem~\ref{them-cot-pair} yields an  FP-injective analog to go along with those in Proposition~\ref{prop-i-p-f-models}.   In particular, this one is dual to the flat model structure.

\begin{theorem}\label{theorem-FP-model}
Let $R$ be Ding-Chen. 
The class $\textnormal{w}\mathcal{DI}_{\textnormal{C}}$ of all weakly Ding injective complexes generates what we call the \textbf{FP-injective model structure} on $\ch$, 
$$\mathfrak{M}_{fp} = (\textnormal{C}(\class{FP}), \tilclass{W}, ({}^\bot{\textnormal{w}\mathcal{DI}}_{\textnormal{C}})^\bot).$$   
The class $\textnormal{C}(\class{FP})$ of cofibrant objects is the class of all complexes of FP-projective modules, and  the class $({}^\bot{\textnormal{w}\mathcal{DI}}_{\textnormal{C}})^\bot$ of fibrant objects has the following easier descriptions:
\begin{enumerate}
\item  $({}^\bot{\textnormal{w}\mathcal{DI}}_{\textnormal{C}})^\bot = \textnormal{C}(\class{Z})$,  the class of all complexes of modules in $\class{Z}$.
\item  $({}^\bot{\textnormal{w}\mathcal{DI}}_{\textnormal{C}})^\bot$ is the class of all complexes $X$ fitting into a short exact sequence $$0 \rightarrow G \rightarrow F \rightarrow X \rightarrow 0$$ where $G$ is a Gorenstein  injective complex  
and $F$ is an FP-injective complex. 
\end{enumerate}
The homotopy category of $\mathfrak{M}_{fp} $ is triangle equivalent to  $\textnormal{St}(\textnormal{C}(\class{GFP}))$, the stable category of the Frobenius category $\textnormal{C}(\class{FP})\mycap \textnormal{C}(\class{Z})  = \textnormal{C}(\class{GFP})$.
 Here, $\class{GFP}$  denotes  the class of all  Gorenstein FP-pro-injective modules in the sense of~\cite[\S8]{gillespie-FP-pro-injective}.   
\end{theorem}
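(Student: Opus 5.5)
Starting from Theorem~\ref{them-cot-pair}, we already have the hereditary abelian model structure $\mathfrak{M} = (\textnormal{C}(\mathcal{FP}), {}^\bot\textnormal{C}(\mathcal{DI}), \textnormal{C}[({}^\bot\textnormal{w}\mathcal{DI})^\bot])$. The plan has four steps.

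\textbf{Trivial objects.} We identify the class of trivial objects with $\tilclass{W}$ when $R$ is Ding-Chen. By Lemma~\ref{lemma-Goren-complexes}, $\textnormal{C}(\mathcal{DI}) = \class{GI}_{\textnormal{C}}$. By Proposition~\ref{prop-i-p-f-models}, $(\tilclass{W}, \class{GI}_{\textnormal{C}})$ is the trivially cofibrant/fibrant cotorsion pair of $\mathfrak{M}_{inj}$, whose cofibrant class is all complexes. Hence $\tilclass{W} = {}^\bot\class{GI}_{\textnormal{C}} = {}^\bot\textnormal{C}(\mathcal{DI})$.

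\textbf{Description (1).} Notation~\ref{notation-relative-G-inj}(1) gives $\class{Z} = ({}^\bot\textnormal{w}\class{DI})^\bot$. Combined with Proposition~\ref{prop-left-right-perp}, this yields $({}^\bot\textnormal{w}\mathcal{DI}_{\textnormal{C}})^\bot = \textnormal{C}(\class{Z})$.

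\textbf{Description (2).} This is the last assertion of Theorem~\ref{them-cot-pair}, with ``Ding injective complex'' replaced by ``Gorenstein injective complex'' via Lemma~\ref{lemma-Goren-complexes}(6).

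\textbf{Homotopy category.} Since $\mathfrak{M}_{fp}$ is hereditary, the standard result quoted in the proof of Proposition~\ref{prop-i-p-f-models} applies. The bifibrant objects $\textnormal{C}(\class{FP})\cap\textnormal{C}(\class{Z}) = \textnormal{C}(\class{FP}\cap\class{Z})$ form a Frobenius category, and the homotopy category is equivalent to its stable category. The remaining point is the equality $\class{FP}\cap\class{Z} = \class{GFP}$ at the module level over a Ding-Chen ring. I expect this to be the main obstacle.

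For the inclusion $\class{GFP}\subseteq\class{FP}\cap\class{Z}$, take $M\in\class{GFP}$. Then $M$ is a cycle of an exact complex of FP-injectives, so $M\in\class{Z}$. Also $M$ is FP-projective, because $M$ is a cycle of an exact $\Hom(-,\class{FP}\cap\class{FI})$-exact complex of FP-projectives and the class of FP-projectives is periodic over coherent rings (\cite[Example 4.3]{saroch-stovicek-G-flat}).

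For the converse, the plan is to use the module-level version of the same structure, namely the FP-injective model structure on $\rmod$ from \cite[\S8]{gillespie-FP-pro-injective}, whose bifibrant objects are $\class{FP}\cap\class{Z}$ and are identified there with $\class{GFP}$. I will cite that result. If it is not stated in that form, the fallback is a direct argument. Given $M\in\class{FP}\cap\class{Z}$, use completeness of the cotorsion pairs $(\class{FP}\cap\class{W}, \class{Z})$ and $(\class{FP}, \class{W}\cap\class{Z}=\class{FI})$ in $\rmod$ to build left and right resolutions of $M$ by modules in $\class{FP}\cap\class{FI}$. These are the projective-injective objects of the module Frobenius category, and splicing the resolutions gives the required $\Hom(\class{FP}\cap\class{FI},-)$-exact complex. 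This also uses that trivially bifibrant objects in $\ch$ are the contractible complexes of FP-pro-injectives, consistent with $\tilclass{W}\cap\textnormal{C}(\class{FP})\cap\textnormal{C}(\class{Z}) = \tilclass{FI}\cap\textnormal{C}(\class{FP})$.
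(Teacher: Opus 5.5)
Your proposal is correct and follows essentially the same route as the paper. You identify $\tilclass{W} = {}^\bot\textnormal{C}(\class{DI})$ via Lemma~\ref{lemma-Goren-complexes} and Proposition~\ref{prop-i-p-f-models}, then invoke Theorem~\ref{them-cot-pair} with $({}^\bot\textnormal{w}\class{DI})^\bot = \class{Z}$ for (1) and Lemma~\ref{lemma-Goren-complexes} for (2), and cite \cite[\S8]{gillespie-FP-pro-injective} for $\class{FP}\cap\class{Z} = \class{GFP}$ (your fallback splicing argument for that equality is sound, though the paper simply cites it).
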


\begin{proof}
By Lemma~\ref{lemma-Goren-complexes} and Proposition~\ref{prop-i-p-f-models}, we have $\tilclass{W} = {}^\bot \textnormal{C}(\mathcal{GI}) = {}^\bot \textnormal{C}(\mathcal{DI})$. So by Theorem~\ref{them-cot-pair}, the class $\textnormal{w}\mathcal{DI}_{\textnormal{C}}$ of weakly Ding injective complexes generates an hereditary abelian model structure on  $\ch$, 
   $$\mathfrak{M}_{fp} =  (\textnormal{C}(\class{FP}) ,  \tilclass{W} ,   ({}^\bot{\textnormal{w}\mathcal{DI}}_{\textnormal{C}})^\bot),$$
 having  $(^\bot\textnormal{w}\mathcal{DI}_{\textnormal{C}})^\bot = \textnormal{C}((^\bot{\textnormal{w}\mathcal{DI}})^\bot)$. But we have $(^\bot{\textnormal{w}\mathcal{DI}})^\bot = \class{Z}$, by~\cite[Prop.~8.1]{gillespie-FP-pro-injective}. So the class of fibrant objects is nothing more than  the class $\textnormal{C}(\class{Z})$ of all complexes of modules in $\class{Z}$. 
Therefore,  the Frobenius category of all  bifibrant objects is  (see~\cite[Prop. 8.3(1)]{gillespie-book}) 
$$\textnormal{C}(\class{FP}) \mycap  ({}^\bot{\textnormal{w}\mathcal{DI}}_{\textnormal{C}})^\bot  = \textnormal{C}(\class{FP})\mycap \textnormal{C}(\class{Z})  
= \textnormal{C}(\class{FP}\mycap \class{Z}) = \textnormal{C}(\class{GFP}),$$  where 
 the last equality is by~\cite[\S8]{gillespie-FP-pro-injective}.   So the homotopy category of $\mathfrak{M}_{fp}$ is triangle equivalent to $\textnormal{St}(\textnormal{C}(\class{GFP}))$, the stable category of  $\textnormal{C}(\class{GFP})$.  (Note: The projective-injective objects are characterized in Proposition~\ref{prop-pro-inj}.) 
 
 The remaining characterization of the fibrant objects  follows immediately  from Theorem~\ref{them-cot-pair} and Lemma~\ref{lemma-Goren-complexes}.
\end{proof}

The next proposition characterizes the projective-injective objects in the  Frobenius category $\textnormal{C}(\class{GFP})$.  

\begin{proposition}\label{prop-pro-inj}
Let $R$ be a  Ding-Chen ring and $X$ be a chain complex of $R$-modules. 
Then the following statements are equivalent:
\begin{enumerate}
\item $X$ is an \textbf{FP-pro-injective} complex. I.e., $X \in \textnormal{C}(\class{FP})\mycap \tilclass{FI}$, the core of the FP-injective cotorsion pair, $(\textnormal{C}(\class{FP}), \tilclass{FI})$, of Lemma~\ref{lemma-FP-cot-comp}.
 \item  $X$ is a projective-injective object of $\textnormal{C}(\class{GFP})$, the Frobenius category of all complexes of FP-pro-injective modules.
\item $X \in \tilclass{FP\cap FI}$, the class of all exact complexes having FP-pro-injective cycle modules.
\item $X$ is a contractible complex of FP-pro-injective modules. 
 \end{enumerate}
 \end{proposition}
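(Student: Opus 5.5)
I would prove the cycle of implications (1)$\Leftrightarrow$(3)$\Leftrightarrow$(4) directly, and then bring in (2) through the general theory of hereditary abelian model structures.

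\textbf{(1)$\Leftrightarrow$(3).} By the description recalled before Lemma~\ref{lemma-FP-cot-comp}, a complex lies in $\tilclass{FI}$ if and only if it is exact with all cycles $Z_nX\in\class{FI}$.

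Suppose in addition that each $X_n\in\class{FP}$. Each short exact sequence $0\to Z_nX\to X_n\to Z_{n-1}X\to 0$ has FP-injective outer terms. Also $\Ext^1_R(Z_{n-1}X,Z_nX)=0$, because $(\class{FP},\class{FI})$ is hereditary. So I want $Z_{n-1}X\in\class{FP}$. Here I would use the periodicity of FP-projectives over a coherent ring (\cite[Example 4.3]{saroch-stovicek-G-flat}, as already cited before the theorem). The complex of cycles gives an exact complex $\cdots\to X_{n+1}\to X_n\to\cdots$ of FP-projectives whose cycles are FP-injective. Hence $\Hom_R(X,Z)$ is exact for each cycle $Z$, since the components are in $\leftperp{\class{FI}}$ and the cycles are in $\class{FI}$. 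Periodicity then forces the cycles into $\class{FP}$. This gives (3).

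Conversely, if all cycles are FP-pro-injective, then each $X_n$ is an extension of two FP-projectives, so $X_n\in\class{FP}$. Also $X\in\tilclass{FI}$. This gives (1).

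\textbf{(3)$\Leftrightarrow$(4).} Assume (3). Each sequence $0\to Z_nX\to X_n\to Z_{n-1}X\to0$ splits, since $Z_{n-1}X\in\class{FP}$, $Z_nX\in\class{FI}$, and $\Ext^1_R(\class{FP},\class{FI})=0$. So $X$ is contractible, with components FP-pro-injective because they are extensions of FP-pro-injectives; this is (4). Conversely, a contractible complex is exact and its cycles are direct summands of its components. Both $\class{FP}$ and $\class{FI}$ are closed under summands, so (4) gives (3).

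\textbf{(2).} By Theorem~\ref{theorem-FP-model}, $\textnormal{C}(\class{GFP})=\textnormal{C}(\class{FP})\cap\textnormal{C}(\class{Z})$ is the Frobenius category of bifibrant objects of the hereditary model structure $\mathfrak{M}_{fp}$. By the standard result (\cite[Prop.~8.3]{gillespie-book}), its projective-injective objects are exactly the trivially bifibrant objects:
$$\textnormal{C}(\class{FP})\cap\tilclass{W}\cap\textnormal{C}(\class{Z}).$$
The trivially fibrant class is the right half of the complete cotorsion pair $(\textnormal{C}(\class{FP}),\tilclass{FI})$, so $\tilclass{W}\cap({}^\bot\textnormal{w}\mathcal{DI}_{\textnormal{C}})^\bot=\tilclass{FI}$. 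Hence the projective-injectives are $\textnormal{C}(\class{FP})\cap\tilclass{FI}$, which is (1).

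The main obstacle I expect is making sure that the model structure produced by \cite[Thms.~3.2/3.4]{gillespie-FP-pro-injective} really has trivially fibrant class $\tilclass{FI}$, that is, that the FP-injective cotorsion pair is the pair used there. If this is not explicit, I would check it directly. $\tilclass{FI}\subseteq\tilclass{W}$ holds since FP-injective modules have finite FP-injective dimension. $\tilclass{FI}\subseteq\textnormal{C}(\class{Z})$ holds since FP-injective modules lie in $\class{Z}$. Finally, uniqueness of the trivially fibrant class, as $\textnormal{C}(\class{FP})^\perp$, gives the equality.

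There is one mild wrinkle. Statement (2) calls $\textnormal{C}(\class{GFP})$ the category of complexes of FP-pro-injectives, but it should be read as complexes of Gorenstein FP-pro-injectives, as in Theorem~\ref{theorem-FP-model}.
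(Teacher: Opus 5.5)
Your argument is essentially the paper's own. You identify (2) with the core $\textnormal{C}(\class{FP})\mycap\tilclass{W}\mycap\textnormal{C}(\class{Z})$ of $\mathfrak{M}_{fp}$ via \cite[Prop.~8.3]{gillespie-book}, and this core is $\textnormal{C}(\class{FP})\mycap\tilclass{FI}$. Periodicity of FP-projectives turns it into $\tilclass{FP\cap FI}$, and splitting gives (4). Your explicit reason that the trivially fibrant class is $\textnormal{C}(\class{FP})^{\perp}=\tilclass{FI}$ supplies a step the paper leaves implicit. You are also right that (2) should refer to complexes of \emph{Gorenstein} FP-pro-injectives.

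In (1)$\Rightarrow$(3), two of your intermediate justifications are invalid and circular, and should be deleted.
\begin{enumerate}
\item Heredity of $(\class{FP},\class{FI})$ does not give $\Ext^1_R(Z_{n-1}X,Z_nX)=0$, because you do not yet know $Z_{n-1}X\in\class{FP}$. If that vanishing were available, $0\to Z_nX\to X_n\to Z_{n-1}X\to 0$ would split, and $Z_{n-1}X$ would be a summand of $X_n$ with no need for periodicity.
\item Since $\Ext^1_R(X_n,N)=0$ for $N\in\class{FI}$, exactness of $\Hom_R(X,N)$ is equivalent to $\Ext^1_R(Z_mX,N)=0$ for all $m$. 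So asserting it ``because the components are in $\leftperp{\class{FI}}$ and the cycles are in $\class{FI}$'' assumes the conclusion.
\end{enumerate}
Neither claim is needed. The periodicity result you cite, \cite[Example~4.3]{saroch-stovicek-G-flat}, says that over a left coherent ring \emph{every} exact complex of FP-projective modules has FP-projective cycles. Applied directly to $X$, this is exactly the step the paper uses, and it is all you need.
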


\begin{proof}
By ~\cite[Prop. 8.3(1)]{gillespie-book}, the class of projective-injective objects in the Frobenius category $\textnormal{C}(\class{FP})\mycap \textnormal{C}(\class{Z}) =  \textnormal{C}(\class{GFP})$  is  precisely  the  core of the  model structure  $\mathfrak{M}_{fp} =  (\textnormal{C}(\class{FP}) ,  \tilclass{W} ,   \textnormal{C}(\class{Z}))$ from  Theorem~\ref{theorem-FP-model}. 
That is, the class
$$\textnormal{C}(\class{FP}) \mycap  \tilclass{W} \mycap \textnormal{C}(\class{Z})   = 
\textnormal{C}(\class{FP}) \mycap  \tilclass{FI} =  \tilclass{FP\cap FI},$$ 
where the last equality follows from  the periodicity of the class of FP-projectives as shown in \cite[Example 4.3]{saroch-stovicek-G-flat}.
So statements (1) and (2) and (3) are equivalent. But note that exact complexes with FP-pro-injective cycles are necessarily split exact complexes,  hence contractible complexes of FP-pro-injective modules.   So (4) is also equivalent. 
\end{proof}

\subsection{The stable chain complex category of $R$}
Recall that the \emph{stable module category} of a Ding-Chen ring $R$  is defined by 
$$\textnormal{Stmod}(R) := \rmod/\class{W},$$
where the right hand side denotes the triangulated localization (in the sense of~\cite[\S6.7]{gillespie-book}) of the category of $R$-modules by the full subcategory of all modules of finite flat dimension, equivalently, of finite FP-injective dimension. 
In the same way, we define the \textbf{\emph{stable chain complex category}} of $R$ to be the triangulated localization  
$$\textnormal{StCh}(R) := \ch/\tilclass{W}.$$
Then analogous to what was shown in~\cite{gillespie-FP-pro-injective} we have  now shown the following result.  

\begin{corollary}\label{cor-stable-cats}
Let $R$ be a Ding-Chen ring. Then  $\textnormal{StCh}(R)$ exists,   it is compactly generated, and it is triangle equivalent to each of the following stable categories:
$$\textnormal{St}(\textnormal{C}(\class{GI})) \simeq\ \textnormal{St}(\textnormal{C}(\class{GP})) \simeq\ \textnormal{St}(\textnormal{C}(\class{GFC}))\simeq\textnormal{St}(\textnormal{C}(\class{GFP})).$$
Here $\class{GI}$,  $\class{GP}$, $\class{GFC}$, and $\class{GFP}$ are respectively the classes of Gorenstein injective,  Gorenstein projective,  Gorenstein flat-cotorsion, and  Gorenstein FP-pro-injective modules. 
\end{corollary}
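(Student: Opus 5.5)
The plan is to assemble the corollary from the four model structures $\mathfrak{M}_{inj}$, $\mathfrak{M}_{proj}$, $\mathfrak{M}_{flat}$ (Proposition~\ref{prop-i-p-f-models}) and $\mathfrak{M}_{fp}$ (Theorem~\ref{theorem-FP-model}). All four are abelian model structures on the same category $\ch$ with the same class $\tilclass{W}$ of trivial objects.

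First, I would invoke the general fact about abelian model structures: an abelian model structure on $\ch$ with trivial class $\tilclass{W}$ has its homotopy category canonically equivalent to the localization of $\ch$ at the weak equivalences. For an abelian model structure, the weak equivalences are exactly the maps that factor as an admissible monomorphism with cokernel in $\tilclass{W}$ followed by an admissible epimorphism with kernel in $\tilclass{W}$. This class depends only on $\tilclass{W}$, and not on the cofibrant or fibrant classes. Hence all four homotopy categories coincide, as triangulated categories, with the Verdier/triangulated localization $\ch/\tilclass{W}$ in the sense of~\cite[\S6.7]{gillespie-book}. In particular this localization exists, and it is by definition $\textnormal{StCh}(R)$. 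The triangulated structures agree because each is induced from the same exact structure on $\ch$ via cofibers/fibers, which become identified in the common localization.

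Second, combining this with the identifications of each homotopy category as a stable category, I obtain the chain of triangle equivalences
$$\textnormal{St}(\textnormal{C}(\class{GI}))\simeq\textnormal{St}(\textnormal{C}(\class{GP}))\simeq\textnormal{St}(\textnormal{C}(\class{GFC}))\simeq\textnormal{St}(\textnormal{C}(\class{GFP}))\simeq\textnormal{StCh}(R).$$
These identifications are the ones stated in Proposition~\ref{prop-i-p-f-models} and Theorem~\ref{theorem-FP-model}, and they come from the Frobenius category of bifibrant objects, \cite[Prop.~8.3]{gillespie-book}.

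The main obstacle is compact generation, which does not follow formally from the model structures above. My plan is to use $\mathfrak{M}_{inj}$, or equivalently the stable module case, and show it is cofibrantly generated by a set of finitely presented generating cofibrations. Over a Ding-Chen ring, the trivial objects $\tilclass{W}$ coincide with the complexes of finite FP-injective dimension $\le d$. Thus $\tilclass{W}$ is determined by $\Ext$-vanishing against finitely presented complexes, via $(\textnormal{C}(\class{FP}),\tilclass{FI})$ and its $d$-fold syzygies. So the cotorsion pair $(\tilclass{W},\class{GI}_{\textnormal{C}})$ should be cogenerated by a set of finitely presented complexes, namely $d$th syzygies of finitely presented complexes together with spheres and disks on $R$. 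Then a standard result (e.g.\ Hovey's or Gillespie's criterion that a model structure whose generating (trivial) cofibrations have finitely presented domains and codomains has compactly generated homotopy category) gives compact generation. Alternatively, I would mirror the argument of~\cite{gillespie-FP-pro-injective} for $\textnormal{Stmod}(R)$ degreewise, using the images of the disks $D^n(M)$ and spheres $S^n(M)$ for finitely presented $M$ as compact generators. Checking that these objects are compact is the delicate step. It relies on coherence: $\Ext^i(M,-)$ commutes with direct limits for finitely presented $M$, and $\tilclass{W}$ and $\class{GI}_{\textnormal{C}}$ are closed under direct limits and coproducts. The latter holds because, over a Ding-Chen ring, Gorenstein injectives equal Ding injectives and are closed under direct limits.
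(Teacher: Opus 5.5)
Your first part matches the paper. All four model structures have the same trivial class $\tilclass{W}$, hence the same weak equivalences, so each homotopy category is the localization $\ch/\tilclass{W}=\textnormal{StCh}(R)$. The paper then uses \cite[\S8.2]{gillespie-book} to identify this with the stable category of the bifibrant Frobenius category.

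The gap is in compact generation. You try to show that $\mathfrak{M}_{inj}=(All,\tilclass{W},\class{GI}_{\textnormal{C}})$ is generated by finitely presented complexes, and you justify this by claiming that Gorenstein injectives over a Ding-Chen ring are closed under direct limits. That is false unless $R$ is Noetherian. The class $\class{GI}\mycap\class{W}$ is exactly the class of injective modules, being the trivially fibrant objects of $(All,\class{W},\class{GI})$ on $\rmod$. A direct sum of injectives is FP-injective, hence lies in $\class{W}$. So if $\class{GI}$ were closed under direct sums, every direct sum of injectives would be injective, forcing $R$ to be Noetherian. For instance, over a non-Noetherian von Neumann regular ring ($d=0$) one has $\class{W}=$ all modules and $\class{GI}=$ injectives.

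Over a coherent ring, $\Ext^1_{\ch}(F,-)$ commutes with direct limits for every finitely presented complex $F$. So any class $\rightperp{\class{S}}$ with $\class{S}$ a set of finitely presented complexes is closed under coproducts. Hence neither cotorsion pair of $\mathfrak{M}_{inj}$, namely $(\tilclass{W},\class{GI}_{\textnormal{C}})$ or $(All,\tilclass{I})$, is cogenerated by finitely presented complexes in general. Your ``alternative'' compactness argument for $D^n(M)$ and $S^n(M)$ rests on the same false closure property.

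Your own key observation points to the right repair. Let $\class{S}$ be the set of $d$-th syzygies of finitely presented complexes. Dimension shifting gives $\rightperp{\class{S}}=\tilclass{W}$, which exhibits $\tilclass{W}$ as a \emph{right}-hand class. So $\class{S}$ cogenerates $(\leftperp{\tilclass{W}},\tilclass{W})=(\textnormal{C}(\class{GP}),\tilclass{W})$, not $(\tilclass{W},\class{GI}_{\textnormal{C}})$, where $\tilclass{W}$ sits on the left.

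This is exactly the paper's route. It uses $\mathfrak{M}_{proj}=(\class{GP}_{\textnormal{C}},\tilclass{W},All)$, whose two cotorsion pairs $(\textnormal{C}(\class{GP}),\tilclass{W})$ and $(\tilclass{P},All)$ are cogenerated by $\class{S}$ and by $\{D^n(R)\}$ respectively. Both are sets of finitely presented complexes, so \cite[Cor.~8.5]{gillespie-book} applies. Since the four homotopy categories are triangle equivalent, compact generation of one gives it for all.
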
 

\begin{proof}
The existence of $\textnormal{StCh}(R)$ and that it is triangle equivalent to the four stable categories is immediate from Proposition~\ref{prop-i-p-f-models} and Theorem~\ref{theorem-FP-model} and~\cite[\S8.2]{gillespie-book}. 
We note that each of the respective categories of complexes of Gorenstein modules is Frobenius with its projective-injective objects being exactly the contractible complexes of injective (resp. projective, resp.  flat-cotorsion, resp. FP-pro-injective) modules.

It is left to see why these categories are compactly generated. Using~\cite[Cor.~8.5]{gillespie-book}, this  follows from the fact that the cotorsion pairs associated to the Gorenstein projective model structure, that is, $(\textnormal{C}(\class{GP}),\tilclass{W})$ and   $(\tilclass{P}, All)$ are both cogenerated by sets of finitely presented objects (complexes). The canonical projective cotorsion pair,  $(\tilclass{P}, All)$, is cogenerated by the set $\{D^n(R)\}$, and these are certainly finitely presented. As for   $(\textnormal{C}(\class{GP}),\tilclass{W})$, the arguments from~\cite[Them.~8.3]{hovey} and~\cite{gillespie-AC-Gorenstein-stable} generalize from modules to complexes. We explain this briefly for the interested reader:
First, a chain complex $F$ is finitely presented in $\ch$ if and only if it is a  bounded complex of finitely presented modules. Since $R$ is coherent, we may find for any such $F$ a resolution  
$$\cdots \rightarrow{} P_2  \rightarrow{} P_1  \rightarrow{} P_0  \rightarrow{} F  \rightarrow{} 0$$ with each $P_i$ a finitely generated projective complex. Let $\Omega^1F = \Ker(P_{0} \rightarrow{} F)$ denote the first syzygy, and more generally let $\Omega^nF = \Ker(P_{n-1} \rightarrow{} P_{n-2})$ denote the $n$-th syzygy. Each $\Omega^nF$ is also finitely presented, again because $R$ is coherent.
Letting $d = FP\textnormal{-id}(R_R) = FP\textnormal{-id}({}_RR)$ be the dimension of the Ding-Chen ring, $R$, we take a set $\class{S} = \{\Omega^dF\}$ of  $d$-th syzygies, where $F$ ranges through the set of all (isomorphism representatives of) finitely presented complexes.
Then a dimension shifting argument shows that $\rightperp{\class{S}}$ equals the class of all complexes  $X$ satisfying  $FP\textnormal{-id}(X) \leq d$. It follows that $\rightperp{\class{S}} = \tilclass{W}$, and so $\class{S}$ is a set of finitely presented complexes that cogenerates the Gorenstein projective cotorsion pair. 
\end{proof}

\subsection{$\textnormal{C}(\class{Z})$-covers and preenvelopes}
Finally, we note  that $\textnormal{C}(\class{Z})$ is both covering and enveloping in $\ch$. Indeed it is a special preenveloping class by Theorem~\ref{theorem-FP-model}.  The fact that it is also covering  is immediate from the following result.  

\begin{theorem}\cite[Them.~5]{iacob-weakly-Ding-1}
  Let $R$ be a coherent ring  with the property
that there is a nonnegative integer $n$ such that every injective $R$-module has flat dimension $\le n$. Then the class $\textnormal{C}(\class{Z})$ is both covering and preenveloping in the category of complexes of $R$-modules.
\end{theorem}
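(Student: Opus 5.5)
The statement is cited from \cite[Them.~5]{iacob-weakly-Ding-1}, so the plan is to reconstruct its two halves separately: preenveloping and covering.

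\emph{Preenveloping.} For the Ding-Chen case this is already in hand: by Theorem~\ref{theorem-FP-model}, $\textnormal{C}(\class{Z})$ is the right half of the complete cotorsion pair $(\textnormal{C}(\class{FP})\mycap\tilclass{W},\textnormal{C}(\class{Z}))$, hence special preenveloping. For the general hypothesis of the statement (coherent $R$, injectives of flat dimension $\le n$), I would argue as follows.
\begin{enumerate}
\item Use the module-level fact $\class{Z}=({}^\bot\textnormal{w}\class{DI})^\bot$, valid in this setting by \cite[\S8]{gillespie-FP-pro-injective}.
\item Deduce that $\class{Z}$ is the right half of a cotorsion pair in $\rmod$ cogenerated by a set. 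The set comes from deconstructibility of ${}^\bot\textnormal{w}\class{DI}$, which one gets by the same Kaplansky-class argument used for $\mathcal{FI}$.
\item Repeat the argument of Theorem~\ref{them-cot-pair} and Proposition~\ref{prop-left-right-perp} to identify $\textnormal{C}(\class{Z})$ with $({}^\bot\textnormal{w}\mathcal{DI}_{\textnormal{C}})^\bot$.
\item Conclude from completeness of this cotorsion pair in $\ch$ that $\textnormal{C}(\class{Z})$ is special preenveloping.
\end{enumerate}

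\emph{Covering.} The strategy mirrors the proof that $\tilclass{FI}$ is covering: show that $\textnormal{C}(\class{Z})$ is precovering and closed under direct limits, then apply \cite[Them.~2.2.12]{xu_flat_covers}.

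For direct limits, it suffices to work degreewise, i.e.\ to show $\class{Z}$ is closed under direct limits of modules. Represent each $Z_i\in\class{Z}$ as a cycle of an exact complex of FP-injectives. To make these choices functorial, take $\class{Z}$ to be the class of modules $M$ admitting an exact sequence $0\to M\to F^0\to F^1\to\cdots$ with FP-injective terms together with a left resolution by FP-injectives. Over a coherent ring, $\mathcal{FI}$ is closed under direct limits and direct limits are exact, so one wants to produce such resolutions naturally. The hypothesis that injectives have flat dimension $\le n$ should enter exactly here: it makes the characterisation of $\class{Z}$ via vanishing of $\Ext^i(A,-)$ for $i\ge1$ and $A$ of finite FP-injective dimension (or an equivalent Tor condition) testable on finitely presented data. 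Such a condition is compatible with direct limits over coherent rings.

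For precovering, show $\class{Z}$ is closed under pure submodules and pure quotients, hence is a Kaplansky class by \cite[Prop.~3.2]{holm-jorgensen-precovers-purity}; since it is also closed under direct limits and direct sums, it is deconstructible. Then $\textnormal{C}(\class{Z})$ is deconstructible in $\ch$ by \cite{stovicek-deconstructible}, hence precovering.

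\emph{Main obstacle.} The hard step is closure of $\class{Z}$ under direct limits and pure quotients, i.e.\ finding the intrinsic, finitely testable description of $\class{Z}$ that uses the bound on flat dimensions of injectives. My first attempt would be to prove
\[
M\in\class{Z}\iff \Ext^{i}_R(W,M)=0 \text{ for all } i\ge1 \text{ and all finitely presented } W \text{ of finite projective dimension},
\]
with the flat-dimension bound guaranteeing that the relevant syzygies can be taken finitely presented.
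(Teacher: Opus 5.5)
\textbf{Verdict: genuine gap.} The paper does not prove this statement; it quotes it from \cite{iacob-weakly-Ding-1}. The only piece the paper derives itself is your first observation, which is correct: over a Ding--Chen ring, $\textnormal{C}(\class{Z})$ is the right half of the complete cotorsion pair $(\textnormal{C}(\class{FP})\mycap\tilclass{W},\textnormal{C}(\class{Z}))$ from Theorem~\ref{theorem-FP-model}, hence special preenveloping.

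\textbf{The covering half.} The gap sits exactly at what you call the main obstacle. Over a coherent ring $\class{FI}$ is closed under direct sums and products, so $\class{Z}$ is evidently closed under both as well. Every remaining step of your covering argument therefore hinges on $\class{Z}$ being closed under pure submodules and pure quotients; you never prove this. The steps that depend on it are the Kaplansky-class step, deconstructibility, closure under direct limits (a direct limit is a pure quotient of a direct sum), and the appeal to \cite[Them.~2.2.12]{xu_flat_covers}.

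Choosing resolutions ``functorially'' does not help. Membership in $\class{Z}$ is existential, so a direct system in $\class{Z}$ comes with no compatible direct system of exact complexes of FP-injectives. You need an intrinsic description of $\class{Z}$.

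For your candidate description, only one inclusion is routine. If $M=Z_0E$ and $W$ is finitely presented with $\textnormal{pd}\,W=k$, dimension shifting along $\cdots\to E_1\to E_0$ gives $\Ext^i_R(W,M)\cong\Ext^{i+k}_R(W,Z_kE)=0$, and this holds over any coherent ring. The converse requires building a left FP-injective resolution of $M$ from $\Ext$-vanishing alone. That is where the bound on flat dimensions of injectives must enter, and you give no mechanism for it.

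The hypothesis does enter naturally through character modules. Here $E^+$ is an exact complex of flat right modules. Since $\textnormal{fd}\,I\le n$, dimension shifting makes $E^+\otimes_R I$ exact for every injective $I$, so $M^+$ is Gorenstein flat. Some converse of this kind would still have to be proved.

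\textbf{The preenveloping half outside the Ding--Chen case.} Your first step is unsupported. The paper uses $\class{Z}=(\leftperp{\textnormal{w}\class{DI}})^\perp$ from \cite[\S8]{gillespie-FP-pro-injective} only for Ding--Chen rings. The statement assumes only coherence plus a bound on flat dimensions of injectives, which is implied by, but not a priori equivalent to, being Ding--Chen.

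Your second step is unnecessary and also wrong as argued. Proposition~\ref{prop-left-right-perp} and Theorem~\ref{them-cot-pair} already give, over every coherent ring, a complete cotorsion pair in $\ch$ with right half $\textnormal{C}[(\leftperp{\textnormal{w}\class{DI}})^\perp]$, so the first step alone would suffice. Moreover, $\leftperp{\textnormal{w}\class{DI}}$ is a left $\Ext$-orthogonal class and is not closed under pure quotients in general. Over a non-semisimple von Neumann regular ring it is exactly the class of projective modules. So the Kaplansky argument used for $\class{FI}$ does not transfer.

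If you did establish closure of $\class{Z}$ under pure submodules and pure quotients, you would get both halves at once. $\textnormal{C}(\class{Z})$ inherits these closures degreewise, since pure exact sequences in $\ch$ are degreewise pure. The standard purity criteria then apply: pure submodules plus products give preenvelopes, and pure quotients plus direct sums give covers. No cotorsion-pair detour would be needed.
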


\begin{corollary}\label{corollary-covers}
  Let $R$ be a Ding-Chen ring.   Then the class $\textnormal{C}(\class{Z})$ is both covering and special preenveloping in the category of complexes of $R$-modules.
\end{corollary}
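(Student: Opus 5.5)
The plan is to check that a Ding-Chen ring satisfies the hypothesis of the quoted theorem \cite[Them.~5]{iacob-weakly-Ding-1}. That theorem gives that $\textnormal{C}(\class{Z})$ is covering. The special preenveloping part will come separately, from the complete cotorsion pair in Theorem~\ref{theorem-FP-model}.

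First, coherence. A Ding-Chen ring is two-sided coherent by definition, so in particular it is left coherent.

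Second, the dimension bound. We need a nonnegative integer $n$ such that every injective $R$-module has flat dimension at most $n$. Take $n=d=FP\textnormal{-id}(R_R)=FP\textnormal{-id}({}_RR)$. Any injective module $E$ is FP-injective, so $\textnormal{FP-id}(E)=0<\infty$. By the Ding--Chen equivalences recalled at the start of Section~\ref{section-Ding-Chen}, $E\in\class{W}$, and hence $\textnormal{fd}(E)\le d$. Applying \cite[Them.~5]{iacob-weakly-Ding-1} then shows that $\textnormal{C}(\class{Z})$ is covering, and also preenveloping, in $\ch$.

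Third, special preenvelopes. By Theorem~\ref{theorem-FP-model}(1), $\textnormal{C}(\class{Z})=({}^\bot\textnormal{w}\mathcal{DI}_{\textnormal{C}})^\bot$. This class is the right half of the cotorsion pair
$$(\textnormal{C}(\class{FP})\mycap\tilclass{W},\ \textnormal{C}(\class{Z})),$$
which Theorem~\ref{them-cot-pair} shows is complete. (By Theorem~\ref{theorem-FP-model}, ${}^\bot\textnormal{C}(\class{DI})=\tilclass{W}$ here.) For any complex $X$, completeness gives a short exact sequence $0\to X\to Z\to C\to 0$ with $Z\in\textnormal{C}(\class{Z})$ and $C$ in the left class. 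This is exactly a special $\textnormal{C}(\class{Z})$-preenvelope of $X$.

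There is no real obstacle. The only point needing care is the flat-dimension bound on injectives, and it follows directly from the Ding--Chen dimension equivalences already quoted in the paper.
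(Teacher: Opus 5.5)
Your proof is correct and follows the paper's argument: special preenvelopes come from the complete cotorsion pair $(\textnormal{C}(\class{FP})\cap\tilclass{W},\ \textnormal{C}(\class{Z}))$ given by Theorems~\ref{them-cot-pair} and~\ref{theorem-FP-model}, and covers come from \cite[Them.~5]{iacob-weakly-Ding-1}. You also explicitly verify that theorem's hypothesis: injective modules are FP-injective, so the Ding--Chen dimension equivalences give flat dimension at most $d$. The paper leaves this check implicit.
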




\begin{thebibliography}{9}



\bibitem[BHP24]{bazzoni-hrbek-positselski-fp-projective-periodicity} Silvana Bazzoni, Michal Hrbek, and Leonid Positselski, \emph{fp-Projective periodicity},  J. Pure Appl. Algebra vol.~228, no.~3, 2024.


\bibitem[CET20]{cet-totally-flat-cot-theory} Lars Winther Christensen, Sergio Estrada, and Peder Thompson, \emph{Homotopy categories of totally acyclic complexes with applications to the flat–cotorsion theory}, Categorical, homological and combinatorial methods in algebra, Contemp. Math. vol. 751, Amer. Math. Soc., Providence, RI, 2020, pp.~99--118.


\bibitem[CET24]{cet-one-sided-Goren} 
Lars Winther Christensen, Sergio Estrada, and Peder Thompson,  \emph{One-sided Gorenstein rings},  Forum Math.  vol.~36, no.~5, 2024, pp.~1411--1433. https://doi.org/10.1515/forum-2023-0303

\bibitem[DC93]{ding and chen 93} N. Ding and J. Chen, \emph{The flat dimensions of injective modules}, Manuscripta Math. vol.~78, no.~2, 1993, pp.~165--177.

\bibitem[DM08]{ding and mao 08}
N. Ding and L. Mao, \emph{Gorenstein FP-injective and Gorenstein flat modules}, J. Algebra Appl. vol.~7, no.~4, 2008, pp.~491-506.
 
 \bibitem[EJT93]{G-flat-modules}
E.E. Enochs, O.M.G. Jenda and B. Torrecillas.
\emph{Gorenstein flat modules}, Nanjing Univ. J. Math. Biquarterly vol.~10, no.~1, 1993, pp.~1--9. 
 
 \bibitem[EJ00]{enochs-jenda-book} E.~Enochs and O.~Jenda, \emph{Relative homological algebra}, de  Gruyter Exp. Math. vol. 30, Walter de Gruyter, New York, 2000.

 \bibitem[Gil04]{gillespie} James Gillespie, \emph{The flat model structure on Ch(R)}, Trans. Amer. Math. Soc. vol.~356, no.~8, 2004, pp.~3369--3390.

 \bibitem[Gil10]{gillespie-Ding-Chen rings} 
 James Gillespie, \emph{Model structures on modules over Ding-Chen rings}, Homology, Homotopy Appl. vol.~12, no.~1, 2010, pp.~61--73.
 
 \bibitem[Gil16]{gillespie-recollement}
James Gillespie, \emph{Gorenstein complexes and recollements from cotorsion  pairs}, Advances in Mathematics vol.~291, 2016, pp.~859--911.


\bibitem[Gil17]{gillespie-ding-modules} James Gillespie, \emph{On Ding injective, Ding projective and Ding flat modules and complexes}, Rocky Mountain J. Math. vol.~47, no.~8, 2017, pp.~2641--2673.

 \bibitem[Gil19]{gillespie-AC-Gorenstein-stable}
  James Gillespie, \emph{AC-Gorenstein rings and their stable module categories}, Journal of the Australian Mathematical Society vol.~107, no.~2, 2019,  pp.~181--198.


\bibitem[Gil24]{gillespie-book} James, Gillespie,  \emph{Abelian Model Category Theory}, Cambridge University Press; 2024.

\bibitem[Gil26]{gillespie-FP-pro-injective} James Gillespie, \emph{Acyclic complexes of FP-injective modules over Ding-Chen rings}, arXiv:2602.09371v1.


\bibitem[GI23]{gillespie-iacob-Ding-inj-complexes} James Gillespie and Alina Iacob, \emph{Ding injective envelopes in the category of complexes},  Rend. Circ. Mat. Palermo, II. Ser vol.~72,  2023, pp.~997--1004. 


\bibitem[GT06]{trlifaj-book} R{\"u}diger G{\"o}bel and Jan Trlifaj, \emph{Approximations and Endomorphism Algebras of Modules}, de Gruyter Exp. Math. vol.~41, Walter de Gruyter \& Co.,  Berlin, 2006.

\bibitem[HJ08]{holm-jorgensen-precovers-purity} Henrik Holm and Peter J\o rgensen, \emph{Covers, precovers, and purity}, Illinois J. Math. vol.~52,  no.~2, 2008, pp.~691–-703.

\bibitem[Hov02]{hovey} Mark Hovey, \emph{Cotorsion pairs, model category structures,  and representation theory}, Math. Z. vol.~241,  553-592, 2002.

\bibitem[Iac23]{iacob-weakly-Ding-1} Alina Iacob, \emph{Weakly Ding injective modules and complexes}, Comm. Algebra. vol.~51, no.~12, 2023, pp.~4899--4912.

\bibitem[Iac25]{iacob-weakly-Ding-2} Alina Iacob, \emph{Weakly Ding injective preenvelopes and covers}, Comm. Algebra. vol.~53, no.~10, 2025, pp.~4487--4497.

\bibitem[Iac26]{iacob-weakly-Ding-3} Alina Iacob, \emph{Gorenstein flat preenvelopes and Weakly Ding covers},  arXiv:2601.15469v1,  2026.

\bibitem[Pin05]{Kathy}
K. Pinzon, \emph{Absolutely pure modules}, University of Kentucky, 
\newblock  PhD Dissertation, 2005.


\bibitem[Ste70]{stenstrom-fp} Bo~Stenstr{\"o}m, \emph{Coherent rings and FP-injective modules}, J.  London Math. Soc. (2) vol.~2, 1970, pp.~323--329.

\bibitem[Sto13]{stovicek-deconstructible} Jan {\v{S}}\v{t}ov{\'{\i}}{\v{c}}ek, \emph{Deconstructibility and the {H}ill lemma  in {G}rothendieck categories}, Forum Math. vol.~25, no.~1, 2013,  pp.~193--219.
  
\bibitem[Sto14]{stovicek-purity} Jan {\v{S}}\v{t}ov{\'{\i}}{\v{c}}ek, \emph{On purity and applications to coderived and singularity categories},  arXiv:1412.1615.


\bibitem[SS20]{saroch-stovicek-G-flat} Jan {\v{S}}aroch and Jan {\v{S}}\v{t}ov{\'{\i}}{\v{c}}ek, \emph{Singular compactness and definability for $\Sigma$-cotorsion and Gorenstein modules}, Selecta Math. (N.S.) vol.~26, no. 2, 2020, Paper No. 23, 40 pp. 


\bibitem[Xu96]{xu_flat_covers}
  Jinzhong Xu, \emph{Flat Covers of Modules}, Lecture Notes in Mathematics, vol.~1634, Springer-Verlag, Berlin – Heidelberg, 1996.
  
\bibitem[YLL13]{Ding-Chen-complex-models}
Gang Yang, Zhongkui Liu, and Li Liang, \emph{Model structures on categories of complexes over Ding-Chen rings}, Communications in Algebra, vol.~41, 2013, pp.~50--69.


\bibitem[YL11]{yang-liu-gorenstein-complexes}
Xiaoyan Yang and Zhongkui Liu, \emph{Gorenstein projective, injective, and flat complexes}, Comm. Algebra. vol.~39, no.~5, 2011, pp.~1705--1721.


\bibitem[ZC11]{Goren-FP-inj} Yuedi Zeng and Jianlong Chen, \emph{On Gorenstein FP-injective modules},  J. Southeast Univ. (English Ed.) vol.~27, no.~1,  2011, pp.~115--118.


 \end{thebibliography}
\end{document}